\numberwithin{equation}{section}
\begin{document}

	\newtheorem{thm}{Theorem}[section]
	\newtheorem{prop}[thm]{Proposition}
	\newtheorem{lem}[thm]{Lemma}
	\newtheorem{cor}[thm]{Corollary}
	\newtheorem{rem}[thm]{Remark}
	\newtheorem*{defn}{Definition}

	\newtheorem{definit}[thm]{Definition}
	\newtheorem{setting}{Setting}
	\renewcommand{\thesetting}{\Alph{setting}}
	
	\newcommand{\DD}{\mathbb{D}}
	\newcommand{\NN}{\mathbb{N}}
	\newcommand{\ZZ}{\mathbb{Z}}
	\newcommand{\QQ}{\mathbb{Q}}
	\newcommand{\RR}{\mathbb{R}}
	\newcommand{\CC}{\mathbb{C}}
	\renewcommand{\SS}{\mathbb{S}}

	\renewcommand{\theequation}{\arabic{section}.\arabic{equation}}

	\newcommand{\supp}{\mathop{\mathrm{supp}}}    
	
	\newcommand{\re}{\mathop{\mathrm{Re}}}   
	\newcommand{\im}{\mathop{\mathrm{Im}}}   
	\newcommand{\dist}{\mathop{\mathrm{dist}}}  
	\newcommand{\link}{\mathop{\circ\kern-.35em -}}
	\newcommand{\spn}{\mathop{\mathrm{span}}}   
	\newcommand{\ind}{\mathop{\mathrm{ind}}}   
	\newcommand{\rank}{\mathop{\mathrm{rank}}}   
	\newcommand{\Fix}{\mathop{\mathrm{Fix}}}   
	\newcommand{\codim}{\mathop{\mathrm{codim}}}   
	\newcommand{\conv}{\mathop{\mathrm{conv}}}   
	\newcommand{\epsi}{\mbox{$\varepsilon$}}
	\newcommand{\eps}{\mathchoice{\epsi}{\epsi}
		{\mbox{\scriptsize\epsi}}{\mbox{\tiny\epsi}}}
	\newcommand{\cl}{\overline}
	\newcommand{\pa}{\partial}
	\newcommand{\ve}{\varepsilon}
	\newcommand{\zi}{\zeta}
	\newcommand{\Si}{\Sigma}
	\newcommand{\cA}{{\mathcal A}}
	\newcommand{\cG}{{\mathcal G}}
	\newcommand{\cH}{{\mathcal H}}
	\newcommand{\cI}{{\mathcal I}}
	\newcommand{\cJ}{{\mathcal J}}
	\newcommand{\cK}{{\mathcal K}}
	\newcommand{\cL}{{\mathcal L}}
	\newcommand{\cN}{{\mathcal N}}
	\newcommand{\cR}{{\mathcal R}}
	\newcommand{\cS}{{\mathcal S}}
	\newcommand{\cT}{{\mathcal T}}
	\newcommand{\cU}{{\mathcal U}}
	\newcommand{\OM}{\Omega}
	\newcommand{\B}{\bullet}
	\newcommand{\ol}{\overline}
	\newcommand{\ul}{\underline}
	\newcommand{\vp}{\varphi}
	\newcommand{\AC}{\mathop{\mathrm{AC}}}   
	\newcommand{\Lip}{\mathop{\mathrm{Lip}}}   
	\newcommand{\es}{\mathop{\mathrm{esssup}}}   
	\newcommand{\les}{\mathop{\mathrm{les}}}   
	\newcommand{\nid}{\noindent}
	\newcommand{\pzr}{\phi^0_R}
	\newcommand{\pir}{\phi^\infty_R}
	\newcommand{\psr}{\phi^*_R}
	\newcommand{\pow}{\frac{N}{N-1}}
	\newcommand{\ncl}{\mathop{\mathrm{nc-lim}}}   
	\newcommand{\nvl}{\mathop{\mathrm{nv-lim}}}  
	\newcommand{\la}{\lambda}
	\newcommand{\La}{\Lambda}    
	\newcommand{\de}{\delta}    
	\newcommand{\fhi}{\varphi} 
	\newcommand{\ga}{\gamma}    
	\newcommand{\ka}{\kappa}   
	
	\newcommand{\core}{\heartsuit}
	\newcommand{\diam}{\mathrm{diam}}

	\newcommand{\lan}{\langle}
	\newcommand{\ran}{\rangle}
	\newcommand{\tr}{\mathop{\mathrm{tr}}}
	\newcommand{\diag}{\mathop{\mathrm{diag}}}
	\newcommand{\dv}{\mathop{\mathrm{div}}}
	
	\newcommand{\al}{\alpha}
	\newcommand{\be}{\beta}
	\newcommand{\Om}{\Omega}
	\newcommand{\na}{\nabla}
	
	\newcommand{\cC}{\mathcal{C}}
	\newcommand{\cM}{\mathcal{M}}
	\newcommand{\nr}{\Vert}
	\newcommand{\De}{\Delta}
	\newcommand{\cX}{\mathcal{X}}
	\newcommand{\cP}{\mathcal{P}}
	\newcommand{\om}{\omega}
	\newcommand{\si}{\sigma}
	\newcommand{\te}{\theta}
	\newcommand{\Ga}{\Gamma}
	
	\newcommand{\vV}{\mathbf{v}}
	\newcommand{\lbunu}{\ul{m}}
	\newcommand{\ca}{\ul{a}}
	\newcommand{\Vve}{\ul{\varepsilon}}
	\newcommand{\ur}{\ul{r}}
	\newcommand{\cB}{{\mathcal B}}
	
	\title[Bubbling and quantitative stability for Alexandrov's SBT with $L^1$-type deviations]{Bubbling and quantitative stability for Alexandrov's Soap Bubble Theorem with $L^1$-type deviations}

	\author{Giorgio Poggesi}
	\address{Discipline of Mathematical Sciences, The University of Adelaide, Adelaide SA 5005, Australia}
	%
	%%%%%%%%%%%%%%%%%%%
	%\address{Department of Mathematics and Statistics, The University %of Western Australia, 35 Stirling Highway, Crawley, Perth, WA %6009, Australia.}
	%
	%\curraddr{...}
	%
	%\email{giorgio.poggesi@uwa.edu.au}
	%%%%%%%%%%%%%%%%%%%%
	
	\email{giorgio.poggesi@adelaide.edu.au}

	% \dedicatory{To }
	
	\begin{abstract}
		The quantitative analysis of bubbling phenomena for almost constant mean curvature boundaries is an important question having significant applications in various fields including capillarity theory and the study of mean curvature flows. Such a quantitative analysis was initiated in [G. Ciraolo and F. Maggi, Comm. Pure Appl. Math. (2017)], where the first quantitative result of proximity to a set of disjoint balls of equal radii was obtained in terms of a uniform deviation of the mean curvature from being constant. Weakening the measure of the deviation in such a result is a delicate issue that is crucial in view of the applications for mean curvature flows. Some progress in this direction was recently made in [V. Julin and J. Niinikoski, Anal. PDE (2023)], where $L^{N-1}$-deviations are considered for domains in $\RR^N$. In the present paper we significantly weaken the measure of the deviation, obtaining a quantitative result of proximity to a set of disjoint balls of equal radii for the following deviation
		$$
		\int_{\pa\Om} \left(H_0-H\right)^+ dS_x, \quad \text{ where } 
		\begin{cases}
		H \text{ is the mean curvature of } \pa\Om ,
		\\
		H_0:=\frac{|\pa\Om|}{N |\Om|}  ,
		\\
		\left(H_0 - H\right)^+:=\max\left\lbrace H_0 - H , 0 \right\rbrace ,
	    \end{cases}
		$$
		which is clearly even weaker than $\nr H_0-H \nr_{L^1(\pa\Om)}$.
	\end{abstract}

	\keywords{Alexandrov's Soap Bubble Theorem,
		%Heintze-Karcher's inequality, Serrin's overdetermined problem, 
		integral identities, bubbling,
		symmetry, rigidity, stability, quantitative estimates}
	\subjclass{Primary 53A10, 35B35, 35N25; Secondary 35A23, 53E10}

	\maketitle

	\raggedbottom

	\section{Introduction}
	
	Alexandrov's Soap Bubble Theorem states that if a (smooth) bounded domain\footnote{In our notation, domain means connected open set.} $\Om$ in $\RR^N$ ($N\ge 2$) has boundary $\pa\Om$ with constant mean curvature, then $\Om$ is a ball (and $\pa\Om$ is a sphere).
	In other words, such a rigidity statement provides the characterization of (smooth)\footnote{See \cite{DM} for the corresponding generalization to sets of finite perimeter.}
	critical points of the isoperimetric problem.
	%%%%%%%%%%%%%
	%local minimizers of the perimeter functional.
	%%%%%%%%%%%%%%%%%%%%%%%%%
	The quantitative stability analysis of such a rigidity result is fundamental for various applications in several research areas including capillarity theory (see, e.g., \cite{CirMag}) and the study of mean curvature flows (see, e.g., \cite{JN}). In particular, for the last type of applications it is important to consider weak-type deviations of the mean curvature from being constant.
	
	Notice that, given any $R>0$, disconnected sets made of disjoint balls of radius $R$ still satisfy $H \equiv 1/R$ (where $H$ denotes the mean curvature), but such type of sets are excluded in the rigidity statement above due to the connectedness assumption on $\Om$. Nevertheless, when studying the stability issue of such a rigidity statement, configurations of connected sets converging to disjoint balls (of equal radii) can display small deviations of their mean curvature from being constant and hence must be taken into account. These are the so-called {\it bubbling} phenomena, which in this context have been quantitatively studied in \cite{CirMag,JN} (see also the seminal papers \cite{BC,St} which deal with parametrized surfaces and \cite{DMMN} where a qualitative but not quantitative analysis is provided in a very general setting).

	Under assumptions preventing bubbling phenomena (e.g., a uniform sphere condition or isoperimetric-type bounds), several quantitative stability results of proximity to a single ball have been established: \cite{CV,KM,MP,MP2,MP3,MP6,Scheuer}. Such a case is now well understood and, in fact, optimal rates of stability have been obtained (for various measures of the deviation of the mean curvature from being constant) in \cite{CV} (for uniform deviations), \cite[Theorem 1.10]{KM} (restricting the analysis to nearly spherical sets but allowing $L^p$-deviations, with $p\ge 2$ for $N \le 4$ and $p>(N-1)/2$ for $N \ge 5$), \cite[Theorem 4.8]{MP2} and \cite[(i) of Theorem 1.3]{Pog} (for $L^2$-deviations in any dimension); see also \cite[Theorem 3.9]{MP6}, where an optimal linear estimate was obtained for the Gauss map of $\pa \Om$ (for $L^2$ deviations in any dimension). For domains satisfying a uniform interior sphere condition, sharp quantitative stability has also been obtained for the Heintze-Karcher inequality in \cite{Pog}; such an inequality \cite{HK} deals with mean convex domains and is strictly related to Alexandrov's Soap Bubble Theorem, as highlighted by the proofs of Alexandrov's Soap Bubble Theorem discovered by Ros \cite{Ros} and Montiel and Ros \cite{MR}. Other possibly non sharp quantitative results of closeness to a single ball with $L^1$-deviations were obtained in \cite{MP,MP3,Scheuer,ScheuerXia}.
	
	Despite the huge literature on the quantitative stability for the Soap Bubble Theorem providing results of proximity to a single ball, to the author knowledge only two papers deal with general quantitative stability results (allowing bubbling phenomena): the first is \cite{CirMag}, which deals with uniform (i.e., $C^0$) deviations, and the second is \cite{JN} which deals with $L^{N-1}$-deviations.
	%%%%%%%
	% for domains in $\RR^N$.
	%%%%%%%
	Weakening the measure of the deviation in such a type of results is a delicate issue
	%%%%
	%that is crucial especially in view of the applications for mean curvature flows (see, e.g., %\cite{JN}).
	%%%%%%%%%%
	that is particularly important for possible applications to research related to mean curvature flows (see, e.g., \cite{JN}, which was in fact motivated by such a type of applications).
	In the present paper we significantly weaken the measure of the deviation, obtaining a quantitative result of proximity to a set of disjoint balls of equal radii for $L^1$-type deviations.
	
	Our approach is completely different to those used in \cite{CirMag,JN}. In fact, \cite{CirMag} is based on Ros' proof \cite{Ros} of the Soap Bubble Theorem, \cite{JN} is based on the proof given in \cite{MR}, whereas our proof is based on the proof established by Reilly in \cite{Reilly}. As a matter of fact, some of our arguments follow the spirit of \cite{BNST}, where quantitative stability (including bubbling phenomena) was established for Serrin's overdetermined problem (\cite{Se,We}). Hence, our result answers to a question posed  in \cite{CirMag} where the authors noticed that it was not clear if one could take advantage of \cite{BNST} in achieving corresponding results for the Soap Bubble Theorem. In fact, as noticed in \cite{CirMag}, the presence in \cite{BNST} of a technical pointwise assumption on the gradient of the torsion function does not allow to leverage \cite{BNST} in the quantitative study of the Soap Bubble Theorem, but a careful new analysis is required, which we provide here.
	
	We finally mention that from a qualitative point of view, a result of convergence for uniformly bounded and uniformly mean convex sequences of domains with $L^2$-almost constant mean curvature was established in \cite[Theorem 1.1]{DMMN}. Our result provides a quantitative refinement of that result\footnote{The result in \cite{DMMN} also treats the anisotropic setting, which is not treated in the present paper. Nevertheless, the techniques used here may be extended to the anisotropic setting; such an extension will be treated in forthcoming research.}, weakens the mean-convexity assumption, and relaxes the $L^2$-deviation with an $L^1$-type deviation. Actually, as a deviation we consider
	\begin{equation}\label{eq:def delta}
	\de := \int_{\pa\Om} \left(H_0-H\right)^+ dS_x, \quad \text{ where } H_0:=\frac{|\pa\Om|}{N |\Om|} \text{ and } \left(H_0 - H\right)^+(x):=\max\left\lbrace H_0 - H(x) , 0 \right\rbrace,
    \end{equation} 
	which is clearly even weaker than the standard $L^1$-deviation
	$$
	\nr H_0-H \nr_{L^1(\pa\Om)} .
	$$
	Here and in the following, $|\Om|$ and $| \pa \Om|$ denote the $N$-dimensional Lebesgue measure of $\Om$ and the surface measure of $\pa \Om$.
	
	In what follows, we set
	\begin{equation}\label{eq:alpha}
	\al:= \frac{2}{2N+7} ,
	\end{equation}
	and denote with $|B_1|$ the volume of the unit ball in $\RR^N$.
	\begin{thm}\label{thm:stability with diameter}
	Let $\Om \subset \RR^N$, $N\ge 2$, be a bounded open set of class $C^2$ such that
	\begin{equation}\label{eq:assumption lower bound M_0-}
		\infty > \cM_0^-:= \max_{\pa\Om} H^- ,
	\end{equation}
	where $H^- (x):= \max\left\lbrace - H(x) , 0 \right\rbrace$ denotes the negative part of the mean curvature $H$ of $\pa\Om$.
	Set 
	\begin{equation}\label{eq:def H_0 and R}
	R:= \frac{N|\Om|}{|\pa\Om|} \quad \text{and} \quad H_0:=\frac{1}{R} ,
	\end{equation}
	assume that\footnote{The assumption $R \ge 1$ is always satisfied up to a dilation.} $R\ge1$, and consider the deviation $\de$ defined in \eqref{eq:def delta}.
	
	Then, there exist $m$ points $z_i \in \Om$ and $m$ disjoint balls $B_{\rho_{int}^i} (z_i) \subset \Om$ (centered at $z_i$, with radius $\rho_{int}^i$, and contained in $\Om$), $i = 1, \dots, m$, such that,
	setting 
	\begin{equation*}
		\mathcal{F} := \bigcup_{i=1}^m B_{\rho_{int}^i} (z_i) ,
	\end{equation*} 
	we have that	
	\begin{equation}\label{eq:thm_new global statement radii}
			| \rho_{int}^i - R| \le C \,  \de^{\frac{\al}{4}} \qquad \text{for any } i = 1, \dots, m,
	\end{equation}
	\begin{equation}\label{eq:thm_new global statement measure}
		\left| \Om \De \mathcal{F}  \right| \le C \,  \de^{ \frac{\al}{ \max \left\lbrace 4 , N \right\rbrace }} ,
	\end{equation}
	\begin{equation}\label{eq:thm_one-sided Hausdorff closeness}
			\max_{x\in \pa \mathcal{F} } \mathrm{dist}_{\pa\Om} (x) \le C \, \de^{\frac{\al}{2}},
	\end{equation}
	\begin{equation}\label{eq:thm_perimeter closeness}
			\left| | \pa \Om| - \left| \pa \mathcal{F} \right| \right| \le C \,  \de^{ \frac{\al}{ \max \left\lbrace 4 , N \right\rbrace }} ;
	\end{equation}
	moreover, the number $m$ is bounded from above by means of
	\begin{equation}\label{eq:bound on number m}
		m \le \frac{|\Om|}{|B_1| | R - C \, \de^{\frac{\al}{4}} |^N} .
	\end{equation}
	The constants $C$ appearing in \eqref{eq:thm_new global statement radii}, \eqref{eq:thm_new global statement measure}, \eqref{eq:thm_one-sided Hausdorff closeness}, \eqref{eq:thm_perimeter closeness}, and \eqref{eq:bound on number m} (can be explicitly computed and) only depend on the dimension $N$, and upper bounds on the diameter $d_\Om$ of $\Om$ and $\cM_0^-$.
	\end{thm}

%%%%%%%%%%%%%%%%%%%%%%%%%%%%%%%%%%%%%%%%%%%%%%%%%%%%%%%%%
%Removed Remark and added Theorem at the end of Introduction
%%%%%%%%%%%%%%%%%%%%%%%%%%%%%%%%%%%%%%%%%%%%%%%%%%%%%%%%%%%%
%	\begin{rem}[An alternative statement]\label{rem:equivalent statement}
%		{\rm It is not difficult to see that the conclusion of
%	Theorem \ref{thm:stability with diameter} also implies the following (weaker) statement:
%		%%%%%%%
%		%may be equivalently reformulated as follows:
%		%%%%%%%%%%%%%%%%
%		}
%			
%		%%%%%%%%%%
%		%If $\de$ is small enough\footnote{See the beginning of Section \ref{sec:proof} for %details on %this}, then 
%		%%%%%%%%%%%%
%		There exist points $z_i \in \Om$, $i=1,\dots,m$, such that
%		\begin{equation}\label{eq:stability with diameter distance zi zj}
%			| z_i - z_j| \ge 2 R - \ol{C} \, \de^{\frac{\al}{4}} \quad \quad \text{ for any } %i,j\in \left\lbrace 1, \dots , m \right\rbrace, \, i \neq j ,
%		\end{equation}
%		\begin{equation}\label{eq:stabilitywithdiameter and M-}
%			\left| \Om \De \bigcup_{i=1}^m B_R (z_i) \right| \le C \,  \de^{\frac{\al}{ \max %\left\lbrace 4 , N \right\rbrace }}	,
%		\end{equation}
%		and $m$ satisfies \eqref{eq:bound on number m}, where the constants $C$ appearing
%in \eqref{eq:stability with diameter distance zi zj},
%\eqref{eq:stabilitywithdiameter and M-}, and \eqref{eq:bound on number m} (can be explicitly %computed and) only depend on the dimension $N$, and upper bounds on the diameter $d_\Om$ of %$\Om$ and $\cM_0^-$.
%	\end{rem}
%%%%%%%%%%%%%%%%%%%%%%%%%%%%%%%%%%%%%%%%%%%%%%%%%%%%%%%%%%
%
%%%%%%%%%%%%%%%%%%%%%%%%%%%%%%%%%%%%%%%%%%%%%%%

	As already mentioned, our result strongly weakens the measure of the deviation (of the mean curvature from being constant) considered in the previous quantitative stability results (i.e., \cite{CirMag,JN}). In fact, we consider an $L^1$-type deviation, whereas \cite{CirMag} considered uniform deviations (i.e., $C^0$-deviations) and \cite{JN} considered $L^{N-1}$-deviations.
	We also relax the mean-convexity assumption that was required in \cite{CirMag,DMMN}: see (ii) of Remark \ref{rem:intro}.
	Finally, the direct nature of our proof may open the doors to improvements and refinements possibly leading to the optimal stability exponent, which, in presence of bubbling, is a widely open question.

	\begin{rem}\label{rem:intro}
{\rm	
	\begin{enumerate}[(i)]
	\item If in Theorem \ref{thm:stability with diameter} $\Om$ is assumed to be connected, recalling Topping's inequality \cite{Topping}
	\begin{equation}
	d_\Om \le C(N) \int_{\pa\Om} |H|^{N-2} \, dS_x , \text{ where } C(N) \text{ is a constant only depending on } N,
	\end{equation}
	the dependence of the constants in Theorem \ref{thm:stability with diameter} on the (upper bound on the) diameter can be replaced with that on 
	$$
	\int_{\pa\Om} |H|^{N-2} \, dS_x .
	$$
	Such a dependence can be totally removed if the deviation of the mean curvature $H$ from the reference constant $H_0$ is measured in $L^{N-2}$.
	
	\item Assumption \eqref{eq:assumption lower bound M_0-} only requires a finite upper bound on the negative part of $H$; it is clear that if one restricts the analysis to mean-convex domains (as in \cite{CirMag,DMMN}), then the dependence of the constants in Theorem \ref{thm:stability with diameter} from $\cM_0^-$ can be removed (as we can take $\cM_0^-=0$).
	We also stress that assumption \eqref{eq:assumption lower bound M_0-} is not required in \cite{JN} (where $L^{N-1}$-deviations are considered), but is necessary in our setting, as it can be directly checked by considering $\Om:= B_2 \setminus \ol{B}_\epsilon$ for $\epsilon \to 0$ (where $B_2$ and $B_\epsilon$ denote the balls with radii $2$ and $\epsilon$ centered at the origin); in fact, being as $H=1/2$ on $\pa B_2$ and $H= - 1/\epsilon$ on $\pa B_\epsilon$, for $\epsilon \to 0$ we have that \eqref{eq:assumption lower bound M_0-} is not satisfied, and $\de \to 0$ but the conclusion of Theorem \ref{thm:stability with diameter} clearly fails.
    \end{enumerate}
}
\end{rem}
The consequences of the above Remark are collected in the following Corollary.

\begin{cor}\label{cor:two corollaries}
(i) Let $\Om \subset \RR^N$, $N\ge 2$, be a bounded domain of class $C^2$ such that \eqref{eq:assumption lower bound M_0-} holds along with
\begin{equation}
\infty > \cT:=\int_{\pa\Om} H^{N-2} \, dS_x .
\end{equation}
Let $R$ and $H_0$ be those defined in \eqref{eq:def H_0 and R}, and assume that $R \ge 1$.
Then, the conclusion of Theorem~\ref{thm:stability with diameter} holds true with (explicit) constants $C$ (in \eqref{eq:thm_new global statement radii}, \eqref{eq:thm_new global statement measure}, \eqref{eq:thm_one-sided Hausdorff closeness}, \eqref{eq:thm_perimeter closeness}, and \eqref{eq:bound on number m})
%%%%%%%%
%-- or \eqref{eq:stability with diameter distance zi zj},
%\eqref{eq:stabilitywithdiameter and M-}, %\eqref{eq:bound on number m} -- 
%%%%%
only depending on $N$, and upper bounds on $\cM_0^-$ and $\cT$.
If $\Om$ is mean convex (i.e., $H \ge 0$ on $\pa\Om$), then the dependence on $\cM_0^-$ can be removed.

(ii) Let $\Om \subset \RR^N$, $N\ge 3$, be a bounded domain of class $C^2$ such that $|\pa\Om| \le C_0$ and $|\Om| \ge 1/C_0$. Let $R$ and $H_0$ be those in \eqref{eq:def H_0 and R}. If $\Om$ is mean convex and
\begin{equation}\label{eq:nuovaaggiuntasmallnessincorollary}
\nr H_0 - H \nr_{L^{N-2}(\pa\Om)} \le c ,
\end{equation}
then
there exist $m$ points $z_i \in \Om$ and $m$ disjoint balls $B_{\rho_{int}^i } (z_i) \subset \Om$ (centered at $z_i$, with radius $\rho_{int}^i$, and contained in $\Om$), $i = 1, \dots, m$, such that,
setting 
\begin{equation*}
	\mathcal{F} := \bigcup_{i=1}^m B_{\rho_{int}^i} (z_i) ,
\end{equation*} 
we have that
\begin{equation}\label{eq:stab in L^N-2 I}
	| \rho_{int}^i - R| \le C \,  \nr H_0 - H \nr_{L^{N-2}(\pa\Om)}^{\frac{\al}{4}} \qquad \text{for any } i = 1, \dots, m,
\end{equation}
\begin{equation}\label{eq:stab in L^N-2 II}
	\left| \Om \De 	\mathcal{F} \right| \le C \,  \nr H_0 - H \nr_{L^{N-2}(\pa\Om)}^{ \frac{\al}{ \max \left\lbrace 4 , N \right\rbrace }} ,
\end{equation}
\begin{equation}\label{eq:stab in L^N-2_IV_one-sided Hausdorff closeness}
	\max_{x\in \pa \mathcal{F} } \mathrm{dist}_{\pa\Om} (x) \le C \, \nr H_0 - H \nr_{L^{N-2}(\pa\Om)}^{\frac{\al}{2}},
\end{equation}
\begin{equation}\label{eq:stab in L^N-2_V_perimeter closeness}
	\left| | \pa \Om| - \left| \pa \mathcal{F} \right| \right| \le C \,  \nr H_0 - H \nr_{L^{N-2}(\pa\Om)}^{ \frac{\al}{ \max \left\lbrace 4 , N \right\rbrace }} ;
\end{equation}
moreover, the number $m$ is bounded from above by means of
\begin{equation}\label{eq:stab in L^N-2 III}
	m \le \frac{|\Om|}{|B_1| \left( R - C \, \nr H_0 - H \nr_{L^{N-2}(\pa\Om)}^{\frac{\al}{4}} \right)^N} .
\end{equation}
%%%%%%%%%%%%%%%
%%there exist points $z_i \in \Om$, $i=1,\dots,m$, such that
%%\begin{equation*}
%%	| z_i - z_j| \ge 2 R - \ol{C} \, \nr H_0 - H \nr_{L^{N-2}(\pa\Om)}^{\frac{1}{4(N+2)}} %%\quad \quad %\text{ for any } i,j\in \left\lbrace 1, \dots , m \right\rbrace, \, i \neq j,
%%\end{equation*}
%%and
%%\begin{equation*}
%%	\left| \Om \De \bigcup_{i=1}^m B_R (z_i) \right| \le C \,  \nr H_0 - H %%\nr_{L^{N-2}(\pa\Om)}^{\frac{1}{2N(N+2)}}	,
%%\end{equation*}
%%%%%%%%%%%%%%%%%%%%%%%
%%
%%
Here, $C$ (appearing in \eqref{eq:stab in L^N-2 I},
\eqref{eq:stab in L^N-2 II}, \eqref{eq:stab in L^N-2_IV_one-sided Hausdorff closeness}, \eqref{eq:stab in L^N-2_V_perimeter closeness}, and \eqref{eq:stab in L^N-2 III}) and $c$ (appearing in \eqref{eq:nuovaaggiuntasmallnessincorollary}) are
(explicit) constants 
%%%%
%the conclusion of Theorem~\ref{thm:stability with diameter} holds true with $\de$ replaced %by $\nr H_0 - H \nr_{L^{N-2}(\pa\Om)}$ and with 
%%%%%%%%
%(explicit) constants $C$ (in \eqref{eq:thm_new global statement radii}, \eqref{eq:thm_new %global statement measure}, \eqref{eq:thm_one-sided Hausdorff closeness}, %\eqref{eq:thm_perimeter closeness}, and \eqref{eq:bound on number m}) and $c$ (in %\eqref{eq:nuovaaggiuntasmallnessincorollary})
% (can be explicitly computed and) only depend on the dimension $N$ and $C_0$.
%%%%%%%%%%%%%%%%%%%%
only depending on the dimension $N$ and $C_0$.
%%%%%%%%%%%%%%%%%%%%%%%%%%%%%%%%%%%%%%%%%%
%\medskip
%
%For both (i) and (ii), corresponding statements in the spirit of
%Remark \ref{rem:equivalent statement} hold true.
%%%%%%%%%%%%%%%%%%%%%%%%%%%%%%%%%%%%%%%
\end{cor}

The assumption 
$$|\pa\Om| \le C_0 \text{ and } |\Om| \ge 1/C_0$$ 
(which clearly implies $R\ge N / C_0^2$)
is taken in (ii) of the above corollary following the spirit of \cite[Theorem 1.2]{JN}. It is easy to check that such an assumption may be replaced with an upper bound on the perimeter $|\pa\Om|$ only, by fixing $R=1$ (which always holds true up to a dilation), in the spirit of the main result in~\cite{CirMag}.

We notice that the $L^{N-2}$-deviation considered in (ii) of the above corollary is still weaker than the deviations considered in \cite{CirMag,JN}: in fact, \cite{CirMag} considered $C^0$-deviations and \cite{JN} considered $L^{N-1}$-deviations.

\bigskip

%%%%%%%%%%%%%%%%%%%%%%%%%%%%%%%%%%%%%%%%%
	
	In Theorem \ref{thm:stability with diameter} (and hence Corollary \ref{cor:two corollaries}), we obtained closeness of $\Om$ to a family of $m$ disjoint balls that are contained in $\Om$ and have radii that are quantitatively close to $R$ as specified by \eqref{eq:thm_new global statement radii}. By leveraging an iterative argument from \cite{CirMag}, it is not difficult to show that the conclusion of Theorem \ref{thm:stability with diameter} also implies the following alternative statement, which provides closeness to a family of $m$ disjoint balls with radii equal to $R$ (but not necessarily contained in $\Om$).  	
	\begin{thm}\label{thm:NewAlternative_equal radii}
		%%%%
		Let $\Om \subset \RR^N$, $N\ge 2$, be a bounded open set of class $C^2$ such that \eqref{eq:assumption lower bound M_0-} holds.
		Let $R$ and $H_0$ be those in \eqref{eq:def H_0 and R}, assume that $R\ge1$, and consider the deviation $\de$ defined in \eqref{eq:def delta}. 
		%%%%	
		%Under the same assumptions of Theorem \ref{thm:stability with diameter}, the following %conclusion holds true.
		%%%%%%
		
		Then, there exist $m$ points $\widehat{z}_i$ such that the balls $B_{R} (\widehat{z}_i)$ (centered at $\widehat{z}_i$, with radius $R$), $i = 1, \dots, m$, are disjoint and the set 
		\begin{equation*}
			\widehat{\mathcal{F}}:= \bigcup_{i=1}^m B_{R} (\widehat{z}_i) 
		\end{equation*}
		satisfies
		\begin{equation}\label{eq:NewAlternative_thm_new global statement measure}
			\left| \Om \De \widehat{\mathcal{F}} \right| \le C \,  \de^{ \frac{\al}{ \max \left\lbrace 4 , N \right\rbrace }} ,
		\end{equation}
		\begin{equation}\label{eq:NewAlternative_thm_one-sided Hausdorff closeness}
			\max_{x\in \pa \widehat{\mathcal{F}} } \mathrm{dist}_{\pa\Om} (x) \le C \, \de^{\frac{\al}{4}},
		\end{equation}
		\begin{equation}\label{eq:NewAlternative_thm_perimeter closeness}
			\left| | \pa \Om| - | \pa \widehat{\mathcal{F}} | \right| \le C \,  \de^{ \frac{\al}{ \max \left\lbrace 4 , N \right\rbrace }} ;
		\end{equation}
		moreover, the number $m$ is bounded from above as in \eqref{eq:bound on number m}.
		The constants $C$ appearing in \eqref{eq:NewAlternative_thm_new global statement measure}, \eqref{eq:NewAlternative_thm_one-sided Hausdorff closeness}, \eqref{eq:NewAlternative_thm_perimeter closeness}, and \eqref{eq:bound on number m} (can be explicitly computed and) only depend on the dimension $N$, and upper bounds on the diameter $d_\Om$ of $\Om$ and $\cM_0^-$.
	\end{thm}
	
	\begin{rem}
		{\rm It is clear that an analogue of Corollary \ref{cor:two corollaries} also holds true for Theorem \ref{thm:NewAlternative_equal radii}.}
	\end{rem}

%%%%%%%%%%%%%%%%%%%%%%%%%%%%%%%%%%%%%%%

We stress that our results provide three different types of quantitative closeness: the first is an $L^1$-type estimate (such as \eqref{eq:thm_new global statement measure}, \eqref{eq:stab in L^N-2 II}, and \eqref{eq:NewAlternative_thm_new global statement measure}), the second is a one-sided Hausdorff-type estimate (such as \eqref{eq:thm_one-sided Hausdorff closeness}, \eqref{eq:stab in L^N-2_IV_one-sided Hausdorff closeness}, and \eqref{eq:NewAlternative_thm_one-sided Hausdorff closeness}), and the third is a perimeter bound (such as \eqref{eq:thm_perimeter closeness}, \eqref{eq:stab in L^N-2_V_perimeter closeness}, and \eqref{eq:NewAlternative_thm_perimeter closeness}). We finally recall that when the weak deviations (i.e., $L^1$-type or $L^{N-2}$-type deviations) considered in the present paper are replaced by
%
%the stronger $L^{N-1}$-deviation, then the $L^1$-type closeness obtained here can be %improved to Hausdorff-type closeness \cite{JN}, which, if
%%%%%%%%%%%%%%%%%%%%%%	
a stronger $C^0$-type deviation, then one can obtain
%be further strengthened to 
Hausdorff-type closeness to a set of disjoint balls that are mutually tangent with $C^{1,\al}$-closeness away from the tangency points \cite{CirMag} (by exploiting Allard's regularity theorem and a calibration-type argument).

\bigskip

The rest of the paper is organized as follows. Section \ref{sec:Preliminaries} contains preliminary results that will play a crucial role in the proof of Theorem \ref{thm:stability with diameter}. These include explicit bounds for the so-called torsion function and its gradient, as well as a new explicit quantitative upper bound for the volume of the interior tubular neighbourhood of $\pa\Om$, which may also be of independent interest. Section \ref{sec:proof} is devoted to the proof of Theorem \ref{thm:stability with diameter}, Corollary \ref{cor:two corollaries}, and Theorem \ref{thm:NewAlternative_equal radii}.

\section{Preliminaries}\label{sec:Preliminaries}

Given a $C^2$ domain $\Om\subset \RR^N$, we consider its so-called {\it torsion function} $u$, that is the solution of 
\begin{equation}\label{eq:torsionproblem}
	\begin{cases}
	\De u = N \qquad & \text{in } \Om ,
	\\
    u=0 \qquad & \text{on } \pa\Om .
    \end{cases}
\end{equation}
Let $H$ denote the mean curvature of $\pa \Om$, that is,
\begin{equation*}
H(x):=\frac{k_1(x) + \dots + k_{N-1}(x)}{N-1} , \quad \text{ for } x\in \pa \Om ,	
\end{equation*}
where $k_1(x), \dots , k_{N-1}(x)$ denote the $N-1$ principal curvatures of $\pa\Om$ at $x$.

Our starting point is the fundamental identity for Alexandrov's Soap Bubble Theorem that was obtained in \cite{MP} by leveraging the arguments used in \cite{Reilly}:
\begin{equation}\label{eq:SBT fundamental identity MP}
\frac{1}{N-1} \int_{\Om} \left\{ |D^2 u|^2 - \frac{(\De u)^2}{N} \right\} dx + \frac{1}{R} \int_{\pa\Om} \left(u_\nu - R\right)^2 dS_x = \int_{\pa\Om} \left( H_0 -H \right) u_\nu^2 \, dS_x ,
\end{equation}
where $R$ and $H_0$ are the reference constants defined in \eqref{eq:def H_0 and R}.
(See also \cite{FogagnoloPinamonti} for a recent generalization of such an identity to substatic manifolds with boundary of horizon type.)

From the above integral identity we immediately deduce the following.

\begin{lem}\label{lem:startinglemma}
Let $\Om\subset \RR^N$, $N\ge2$, be a bounded domain of class $C^2$. Then we have that
\begin{equation}\label{eq:CS deficit and positive part deviation}
\int_{\Om} \left\{ |D^2 u|^2 - \frac{(\De u)^2}{N} \right\} dx \le (N-1) G^2 \int_{\pa\Om} \left(H_0-H\right)^+ dS_x
\end{equation}
and
\begin{equation}\label{eq:Serrin deficit and positive part deviation}
\frac{1}{R} \int_{\pa\Om} \left(u_\nu - R\right)^2 dS_x	\le G^2 \, \int_{\pa\Om} \left(H_0-H\right)^+ dS_x ,
\end{equation}
where 
\begin{equation*}
G:=\max_{\pa \Om} |\na u|=\max_{\ol{\Om}} |\na u|.
\end{equation*}
\end{lem}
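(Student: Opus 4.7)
The plan is to deduce both inequalities directly from the fundamental identity \eqref{eq:SBT fundamental identity MP}, by showing that each of the two summands on its left-hand side is nonnegative, and by controlling the right-hand side through a simple upper bound on $u_\nu^2$ and the splitting of $H_0 - H$ into its positive and negative parts.

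First I would verify that the bulk term
$$\int_{\Om} \left\{ |D^2 u|^2 - \frac{(\Delta u)^2}{N} \right\} dx$$
is nonnegative. This is immediate from the pointwise Cauchy--Schwarz inequality applied to the eigenvalues of the symmetric matrix $D^2 u$, namely $|D^2 u|^2 \ge (\tr D^2 u)^2 / N = (\Delta u)^2/N$, with equality precisely when $D^2 u$ is a multiple of the identity. The boundary term $\frac{1}{R}\int_{\pa\Om}(u_\nu - R)^2 dS_x$ is visibly nonnegative.

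Next I would control $u_\nu$ on $\pa\Om$. Since $\Delta u = N > 0$ in $\Om$ and $u = 0$ on $\pa\Om$, the maximum principle forces $u \le 0$ in $\Om$, and Hopf's lemma then gives $u_\nu \ge 0$ on $\pa\Om$. Combined with the fact that tangential derivatives of $u$ vanish along $\pa\Om$, one has $u_\nu = |\nabla u|$ on $\pa\Om$, so that $0 \le u_\nu \le G$ pointwise on $\pa\Om$ by the very definition of $G$. Splitting $H_0 - H = (H_0-H)^+ - (H_0-H)^-$, with $(H_0-H)^- \ge 0$, I would bound the right-hand side of \eqref{eq:SBT fundamental identity MP} by
$$\int_{\pa\Om}(H_0 - H) u_\nu^2 \, dS_x \;\le\; \int_{\pa\Om}(H_0-H)^+ u_\nu^2 \, dS_x \;\le\; G^2 \int_{\pa\Om}(H_0-H)^+ dS_x.$$

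To conclude, I would drop from the left-hand side of \eqref{eq:SBT fundamental identity MP} whichever nonnegative term is not needed: dropping $\tfrac{1}{R}\int(u_\nu-R)^2 dS_x$ and multiplying through by $N-1$ yields \eqref{eq:CS deficit and positive part deviation}, while dropping $\tfrac{1}{N-1}\int\{|D^2u|^2 - (\Delta u)^2/N\}\,dx$ yields \eqref{eq:Serrin deficit and positive part deviation}. There is no serious obstacle here; the only small point that deserves explicit mention is the identity $u_\nu = |\nabla u|$ on $\pa\Om$, which is what legitimises replacing $u_\nu^2$ by $G^2 = \max_{\ol{\Om}}|\nabla u|^2$ and, via the interior bound $|\nabla u| \le \max_{\pa\Om}|\nabla u|$ (a consequence of subharmonicity of $|\nabla u|^2$ since $\Delta u$ is constant), ensures that the two expressions for $G$ in the statement agree.
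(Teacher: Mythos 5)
Your proposal is correct and follows exactly the route the paper intends: the lemma is stated as an immediate consequence of the identity \eqref{eq:SBT fundamental identity MP}, obtained by noting both left-hand terms are nonnegative (Cauchy--Schwarz for the Hessian term), bounding the right-hand side via $(H_0-H)\le (H_0-H)^+$ and $u_\nu^2\le G^2$, and dropping whichever nonnegative term is not needed. Your extra remarks on $u_\nu=|\nabla u|$ and the subharmonicity of $|\nabla u|^2$ (justifying $\max_{\pa\Om}|\na u|=\max_{\ol{\Om}}|\na u|$) are correct and, if anything, more detail than the paper provides.
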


We will also need the following bounds for $u$ and its gradient. The gradient bound will be deduced from \cite[Lemma 2.2]{MP5} (that is the generalization to arbitrary dimension of a result established by Payne and Philippin \cite{PP} in dimension $2$).

\begin{lem}
Let $\Om \subset \RR^N$, $N\ge 2$, be a bounded domain of class $C^2$ and let $u$ be the solution of \eqref{eq:torsionproblem}. Then, we have that
\begin{equation}\label{eq:maximum estimate for u}
0 \le -u(x)  \le \frac{d_\Om^2}{2} \quad \text{ in } \ol{\Om}.
\end{equation}

If $\cM_0^-$ is that defined in \eqref{eq:assumption lower bound M_0-}, we have that
\begin{equation}\label{eq:gradient bound}
	G:= \max_{\ol{\Om}} | \na u| \le C(N, \cM_0^- , d_\Om) ,
\end{equation}
for an explicit constant $C$ only depending on $N, \cM_0^- , d_\Om$.

If $\pa\Om$ is mean convex, the dependence on $\cM_0^-$ can be removed.
\end{lem}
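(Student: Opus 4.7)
The plan is to prove the two bounds separately. For the first, I would rely on a simple comparison with a paraboloid. The weak maximum principle applied to $\Delta u = N > 0$ with $u = 0$ on $\pa\Om$ immediately yields $u \le 0$ in $\ol{\Om}$, giving $-u \ge 0$. For the upper bound, I would fix any $x_0 \in \Om$ and consider $w(x) := \tfrac12 \bigl(|x-x_0|^2 - d_\Om^2\bigr)$. Then $\Delta w = N = \Delta u$ in $\Om$, and since $|x-x_0| \le d_\Om$ for every $x\in\pa\Om$ one has $w \le 0 = u$ on $\pa\Om$. The difference $u - w$ is harmonic in $\Om$ and nonnegative on $\pa\Om$, so the maximum principle gives $u \ge w \ge -d_\Om^2/2$ throughout $\ol{\Om}$, which is \eqref{eq:maximum estimate for u}.

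For the gradient bound \eqref{eq:gradient bound}, my plan is first to reduce the estimate to $\pa\Om$ via a classical $P$-function argument, and then to invoke \cite[Lemma 2.2]{MP5} for the actual boundary bound. Setting $P := |\na u|^2 - 2u$, a direct computation combined with the pointwise Cauchy--Schwarz inequality $|D^2 u|^2 \ge (\Delta u)^2/N = N$ gives
\begin{equation*}
\Delta P \;=\; 2|D^2 u|^2 + 2\,\na u \cdot \na(\Delta u) - 2\Delta u \;=\; 2|D^2 u|^2 - 2N \;\ge\; 0 .
\end{equation*}
Hence $P$ is subharmonic in $\Om$ and attains its maximum on $\pa\Om$; since $u = 0$ and $|\na u| = u_\nu$ on $\pa\Om$, and $u \le 0$ in $\Om$, this yields $\max_{\ol{\Om}} |\na u|^2 = \max_{\ol{\Om}}(P + 2u) \le \max_{\pa\Om} P = \max_{\pa\Om} u_\nu^2$, which reduces the problem to bounding $u_\nu$ on $\pa\Om$.

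The boundary bound I would then read off from \cite[Lemma 2.2]{MP5}, the $N$-dimensional generalization of the two-dimensional result of Payne and Philippin \cite{PP}. Its underlying mechanism is to track a suitable auxiliary $P$-function up to $\pa\Om$ and to combine Hopf's lemma with the boundary identity $u_{\nu\nu} = N - (N-1) H u_\nu$ on $\pa\Om$ (which follows from $u = 0$ on $\pa\Om$ by decomposing $\Delta u$ into its normal and tangential parts with respect to the boundary). This identity converts any upper bound on $H^-$ into an upper bound on $u_\nu$ at a boundary maximum, and produces an explicit constant depending only on $N$, $d_\Om$, and $\cM_0^-$. When $\pa\Om$ is mean convex the inequality $H \ge 0$ is available for free, and $\cM_0^-$ drops out of the constant.

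The main obstacle in executing this plan is precisely the boundary step: while the interior-to-boundary reduction through $P = |\na u|^2 - 2u$ is elementary and essentially one-line, controlling $u_\nu$ on $\pa\Om$ is delicate because $H$ may change sign, so Hopf's lemma alone does not dominate $u_\nu$ by a function of $H$ with a definite sign. Handling this is the content of \cite[Lemma 2.2]{MP5} and is what forces the hypothesis \eqref{eq:assumption lower bound M_0-} to appear in the final constant of \eqref{eq:gradient bound}.
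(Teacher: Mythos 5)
Your proposal is correct and follows essentially the same route as the paper: the $L^\infty$ bound comes from comparison with the paraboloid $\tfrac12\left(|x-x_0|^2-d_\Om^2\right)$ (i.e., the torsion function of a ball of radius $d_\Om$ containing $\Om$), and the gradient bound rests on \cite[Lemma 2.2]{MP5}. The only (harmless) difference is that your preliminary $P$-function reduction to $\max_{\pa\Om}u_\nu$ is not needed: the paper invokes \cite[Lemma 2.2]{MP5} directly in the global form $G^2\le 2(N-1)\,\cM_0^-\, G\max_{\ol{\Om}}(-u)+2N\max_{\ol{\Om}}(-u)$, solves this quadratic inequality for $G$, and inserts $\max_{\ol{\Om}}(-u)\le d_\Om^2/2$ to obtain the explicit constant.
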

\begin{proof}
The inequalities in \eqref{eq:maximum estimate for u} easily follow by the maximum principle and comparing $u$ to the torsion function of a ball of radius $d_\Om$ containing $\Om$.
We mention that a finer upper bound for $-u$ may be obtained leveraging a classical result on rearrangements due to Talenti \cite{Talenti}; in fact, exploiting \cite{Talenti} we can deduce that
$$\max_{\ol{\Om}}(-u) \le \frac{\left( \frac{|\Om|}{|B_1|} \right)^{\frac{2}{N}} - |x|^2 }{2} \qquad \text{ for any } x \in B_1:=\left\lbrace x\in\RR^N \, : \, |x|<1 \right\rbrace ,$$
and hence
\begin{equation}\label{eq:finer bound maximum}
\max_{\ol{\Om}}(-u) \le \frac{1}{2} \left( \frac{|\Om|}{|B_1|} \right)^{\frac{2}{N}} .
\end{equation}
The bound in \eqref{eq:maximum estimate for u} (which clearly also follows from the last finer bound) is enough for our purposes.

By \cite[Lemma 2.2]{MP5} we have that
\begin{equation*}
	G^2 \le 2(N-1) \cM_0^- G \max_{\ol{\Om}}(-u) + 2N \max_{\ol{\Om}}(-u) ,
\end{equation*}
and hence,
\begin{equation*}
	G \le  (N-1) \cM_0^- \max_{\ol{\Om}}(-u) + \sqrt{ \left[ (N-1) \cM_0^- \max_{\ol{\Om}}(-u)\right]^2 + 2N \max_{\ol{\Om}}(-u)  } ,
\end{equation*}
from which \eqref{eq:gradient bound} follows recalling the second inequality in \eqref{eq:maximum estimate for u}. It is clear that using \eqref{eq:finer bound maximum} instead of \eqref{eq:maximum estimate for u} would give a finer explicit bound for $G$ in terms of $N$, $\cM_0^-$, and $|\Om|$ only; we do not need such a finer bound in the present paper. 
\end{proof}

We finally need a quantitative explicit upper bound on the measure of the tubular neighbourhood of $\pa\Om$. We recall that the classical Steiner-Weyl formula \cite{Weyl} provides a polynomial-type formula for such a measure, which was extended by Federer to sets of positive reach in \cite{Fe}; such a formula, whose coefficients depend on second order properties of $\pa \Om$, holds true provided that $\eta$ is small enough (in terms of the so-called {\it reach} of $\Om$, see \cite{Fe}) and, in particular, gives the well-known asymptotic behaviour:
\begin{equation}\label{eq:asymptotic tubular}
\lim_{\eta \to 0^+} \frac{\left| \left\lbrace x\in \Om \, : \, \mathrm{dist}_{\pa\Om} (x) \le \eta \right\rbrace \right|}{\eta} = \left| \pa\Om \right|.
\end{equation}
However, in order to establish our main results, we will need the following possibly new quantitative explicit upper bound for $\left| \left\lbrace x\in \Om \, : \, \mathrm{dist}_{\pa\Om} (x) \le \eta \right\rbrace \right|$, which we prove below.

\begin{lem}\label{lem:upper bound tube}
Let $\Om \subset \RR^N$, $N\ge 2$, be a bounded domain of class $C^2$ and let $\cM_0^-$ be that defined in \eqref{eq:assumption lower bound M_0-}. Then, we have that
\begin{equation}\label{eq:upper bound tubular neighbourhood}
	\left| \left\lbrace x\in \Om \, : \, \mathrm{dist}_{\pa\Om} (x) \le \eta \right\rbrace \right| \le \left( 1 + \eta \cM_0^- \right)^{N-1} \, \left| \pa\Om \right| \, \eta . 
\end{equation}
\end{lem}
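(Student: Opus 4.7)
The plan is to exploit the nearest-point parametrization of the interior of $\Om$. Let $\nu$ denote the outward unit normal on $\pa\Om$, set $\Phi(x,t) := x - t\nu(x)$ for $(x, t) \in \pa\Om \times [0, +\infty)$, and let $\tau(x)$ be the cut function, i.e., the supremum of the $t > 0$ for which $x$ is the unique boundary point of $\Om$ realizing the distance to $\Phi(x, t)$. Two classical properties of the nearest-point projection for $C^2$ domains will be crucial: (i) $\Phi$ restricted to $\left\lbrace (x,t) : 0 \le t < \tau(x) \right\rbrace$ is a $C^1$-diffeomorphism onto $\Om$ minus a set of Lebesgue measure zero (the cut locus), and (ii) $\tau(x) \le 1 / \max_i k_i^+(x)$, so that each factor $1 - tk_i(x)$ is non-negative on the relevant domain of integration. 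The Jacobian of $\Phi$ (with respect to $dS_x \otimes dt$) equals $\prod_{i=1}^{N-1}\bigl(1 - tk_i(x)\bigr)$.

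By the area formula, together with the identity $\mathrm{dist}_{\pa\Om}(\Phi(x,t)) = t$ on the regular set, I would obtain
$$
\left| \left\lbrace y \in \Om : \mathrm{dist}_{\pa\Om}(y) \le \eta \right\rbrace \right| = \int_{\pa\Om} \int_0^{\min \left\lbrace \eta, \tau(x) \right\rbrace} \prod_{i=1}^{N-1} \bigl(1 - tk_i(x)\bigr) \, dt \, dS_x .
$$
The next step is to control the integrand using only the mean curvature. Since each factor is non-negative, AM-GM gives
$$
\prod_{i=1}^{N-1}\bigl(1 - tk_i(x)\bigr) \le \left( \frac{1}{N-1}\sum_{i=1}^{N-1}\bigl(1 - tk_i(x)\bigr) \right)^{N-1} = \bigl(1 - tH(x)\bigr)^{N-1} ,
$$
and the definition of $\cM_0^-$ in \eqref{eq:assumption lower bound M_0-} yields $H(x) \ge -\cM_0^-$, which upgrades the previous bound to $\bigl(1 - tH(x)\bigr)^{N-1} \le \bigl(1 + t\cM_0^-\bigr)^{N-1}$.

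Plugging this back in and enlarging $(1 + t\cM_0^-)^{N-1} \le (1 + \eta\cM_0^-)^{N-1}$ for $t \in [0, \eta]$, the inner integral is bounded by $(1 + \eta\cM_0^-)^{N-1}\eta$, and integrating over $\pa\Om$ produces exactly \eqref{eq:upper bound tubular neighbourhood}. The main technical point I anticipate is the rigorous justification of the area formula in the presence of the cut locus: one has to argue that, even though $\Phi$ may fail to be injective or even smooth on a negligible set, the volume of the tubular neighbourhood still equals the integral of the Jacobian over the domain $\left\lbrace 0 \le t < \tau(x) \right\rbrace$. This is standard material from the theory of sets of positive reach (going back to Federer \cite{Fe} and invoked implicitly in the classical Steiner--Weyl expansion \eqref{eq:asymptotic tubular}), so it will reduce to a citation rather than a new argument; the real content of the proof lies in the AM-GM step and in the observation that a lower bound on $H$ alone suffices to control the product of linear forms in the principal curvatures.
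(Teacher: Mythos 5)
Your proof is correct and follows essentially the same strategy as the paper's: parametrize the tubular neighbourhood by the normal map $x-t\nu(x)$, bound the Jacobian $\prod_{i=1}^{N-1}(1-tk_i(x))$ by $(1-tH(x))^{N-1}\le(1+t\cM_0^-)^{N-1}$ via AM--GM and the lower bound $H\ge-\cM_0^-$, and integrate. The one place where you do more than is needed is the claimed \emph{equality} via the cut function $\tau$, which is exactly where the delicate measure-zero cut locus issue enters; the paper sidesteps this entirely by only proving the inclusion of the tube in the image of $\left\lbrace (x,t): x\in\Ga_1,\ 0\le t\le\eta\right\rbrace\cup\left\lbrace (x,t): x\in\Ga_2,\ 0\le t\le 1/k_{N-1}(x)\right\rbrace$ (where $\Ga_1,\Ga_2$ split $\pa\Om$ according to whether $k_{N-1}\le 1/\eta$), using only the elementary focal-distance bound $k_{N-1}(x)\le 1/\mathrm{dist}_{\pa\Om}(y)$ for a nearest point $x$ of $y$, and then invoking the area formula solely as an upper bound for the measure of an image, which requires neither injectivity nor any null-set argument. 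Since only the upper bound is needed, I'd recommend restructuring your argument the same way: surjectivity of the normal map onto the tube from the truncated domain is immediate, and the Lipschitz area-formula inequality $|\Phi(E)|\le\int_E|J\Phi|$ then yields the result without any citation to the structure of the cut locus.
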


It is clear that \eqref{eq:upper bound tubular neighbourhood} recovers \eqref{eq:asymptotic tubular}, but also gives an explicit quantitative information in terms of $\cM_0^-$ and $|\pa\Om|$ only, which is necessary to prove our main results. Notice that no smallness assumption on $\eta$ is required in \eqref{eq:upper bound tubular neighbourhood}.

\begin{proof}[Proof of Lemma \ref{lem:upper bound tube}]
Denoting with $k_1(x) \le \dots \le k_{N-1}(x)$ the principal curvatures of $\pa\Om$ at $x\in \pa\Om$, we consider the following partition of $\pa\Om$:
\begin{equation*}
\Ga_1:= \left\lbrace x \in \pa \Om \, : \, k_{N-1}(x) \le \frac{1}{\eta} \right\rbrace	
\quad \text{and} \quad
	\Ga_2:= \left\lbrace x \in \pa \Om \, : \, k_{N-1}(x) > \frac{1}{\eta} \right\rbrace .	
\end{equation*}
We now show that
\begin{equation*}
	\left\lbrace x \in \Om \, : \, \mathrm{dist}_{\pa\Om} (x) \le \eta \right\rbrace \subset
	\left\lbrace x - t \nu(x) \, : \, x \in \Ga_1 , \, 0 \le t \le \eta \right\rbrace 
	\cup \left\lbrace x - t \nu(x) \, : \, x \in \Ga_2 , \, 0 \le t \le \frac{1}{k_{N-1}(x)} \right\rbrace ,
\end{equation*}	
where as usual $\nu$ denotes the exterior unit normal to $\pa\Om$.
For any given $y \in \left\lbrace x \in \Om \, : \, \mathrm{dist}_{\pa\Om} (x) \le \eta \right\rbrace $, consider $x \in \pa \Om$ such that $\mathrm{dist}_{\pa\Om}(y) = |x-y|$.
It is clear that $\mathrm{dist}_{\pa\Om}(y) \le \eta$; moreover, since $k_{N-1} (x) \le 1/ \mathrm{dist}_{\pa\Om}(y)$, if $x \in \Ga_2$ then we also have that
%%%%%%%%%%%%%%
%By the maximum principle, we have that , and hence
%$\mathrm{dist}_{\pa\Om}(y) \le \min \left\lbrace \eta , 1/ k_{N-1} (x) \right\rbrace ,$
%%%%%%%%%%%%%%%%
$\mathrm{dist}_{\pa\Om}(y) \le 1/ k_{N-1} (x) $, which proves the desired inclusion.

Next, we compute
\begin{equation*}
	\left| \left\lbrace x \in \Om \, : \, \mathrm{dist}_{\pa\Om} (x) \le \eta \right\rbrace  \right| 
	\le \int_{\Ga_1} \int_0^\eta \prod_{i=1}^{N-1} \left| 1 - t \,  k_i (x) \right| dt \, dS_x  + \int_{\Ga_2} \int_0^{ 1/ k_{N-1} (x) } \prod_{i=1}^{N-1} \left| 1 - t \, k_i (x) \right| dt \, dS_x .
\end{equation*}
By definition of $\Ga_1$ and $\Ga_2$, we have that, for any $i=1, \dots, N-1$,
\begin{equation*}
	\left| 1 - t \, k_i (x) \right| =  1 - t \,  k_i (x) \quad \text{ for any }
	(x,t) \in \left( \Ga_1 \times \left[ 0 , \eta \right] \right) \cup \left( \Ga_2 \times \left[ 0, 1 / k_{N-1}(x)  \right] \right) ,
\end{equation*}
and hence, in both the integrals above, we have that
\begin{equation*}
\begin{split}
\prod_{i=1}^{N-1} \left| 1 - t \, k_i (x) \right| 
& = \prod_{i=1}^{N-1} \left( 1 - t \,  k_i (x) \right)
\\
& \le \left( \frac{1}{N-1} \sum_{i=1}^{N-1} \left( 1 - t \,  k_i (x) \right) \right)^{N-1}
\\
& = \left( 1 - t H(x) \right)^{N-1}
\\
& \le \left( 1 + t \, \cM_0^- \right)^{N-1}
\end{split}
\end{equation*}
where we used the arithmetic-geometric inequality and \eqref{eq:assumption lower bound M_0-}. Hence, we get that
\begin{equation*}
	\begin{split}
	\left| \left\lbrace x \in \Om \, : \, \mathrm{dist}_{\pa\Om} (x) \le \eta \right\rbrace  \right| 
	& \le \int_{\Ga_1} \int_0^\eta \left( 1 + t \cM_0^- \right)^{N-1} \, dt \, dS_x  + \int_{\Ga_2} \int_0^{ 1/ k_{N-1} (x) } \left( 1 + t \cM_0^- \right)^{N-1} \, dt \, dS_x 
	\\
	& \le \int_{\Ga_1} \int_0^\eta \left( 1 + t \cM_0^- \right)^{N-1} \, dt \, dS_x  + \int_{\Ga_2} \int_0^{ \eta } \left( 1 + t \cM_0^- \right)^{N-1} \, dt \, dS_x ,
	\end{split}
\end{equation*}
where we used that, by definition of $\Ga_2$, $1/ k_{N-1} (x) < \eta$ on $\Ga_2$.
Thus, recalling that $\Ga_1 \cup \Ga_2 = \pa \Om$, we find
\begin{equation}
\left| \left\lbrace x \in \Om \, : \, \mathrm{dist}_{\pa\Om} (x) \le \eta \right\rbrace  \right| \le 
\int_{\Ga_1 \cup \Ga_2} \int_0^\eta \left( 1 + t \cM_0^- \right)^{N-1} \, dt \, dS_x \le \left( 1 + \eta  \cM_0^- \right)^{N-1} \, |\pa \Om| \, \eta ,	
\end{equation}
that is the desired result.
\end{proof}

\section{Proofs of the main results}\label{sec:proof}

Before proceeding with the proof of Theorem \ref{thm:stability with diameter}, we start with the following two comments.

\begin{rem}\label{rem:Initial remarks on proof of THM}
{\rm 
\begin{enumerate}[(i)]
	\item We stress that Theorem \ref{thm:stability with diameter} is a global result that does not require any smallness assumption on the deviation $\de$. We will prove Theorem \ref{thm:stability with diameter} when $\de \le c$, where $c$ is some explicit constant only depending on the parameters indicated in the statement, i.e., $N$, $d_\Om$, and $\cM_0^-$. On the other hand, if $\de \ge c$,
	then 
%%%%%%%%%%%%%%%%
%	\eqref{eq:thm_new global statement radii}, \eqref{eq:thm_new global statement measure}, %\eqref{eq:thm_one-sided Hausdorff closeness}, and \eqref{eq:thm_perimeter closeness} %trivially hold true,
%%%%%%%%%%%%%%%%%%%%%%%%%%%%%%%%%%%%%	
%	with $m:=0$, 
%%%
the conclusion of Theorem \ref{thm:stability with diameter} trivially holds true,
being as
	\begin{equation*}
		 R \le \left( \frac{|\Om|}{|B_1|} \right)^{\frac{1}{N}} \le d_\Om \le \frac{d_\Om}{c^{\frac{\al}{4}}} \,  \de^{\frac{\al}{4}}  ,
	\end{equation*}
	\begin{equation*}
		\left| \Om \right| \le |B_1| \, d_\Om^N \le \frac{|B_1| \, d_\Om^N}{c^{ \frac{\al}{ \max \left\lbrace 4 , N \right\rbrace }}}  \de^{ \frac{\al}{ \max \left\lbrace 4 , N \right\rbrace }} ,
	\end{equation*}
%%%%%%%%%%%%%%%	
%	$m:=1$, choosing $z_1$ to be any point of $\Om$ and $\rho_{int}^1:=\mathrm{dist}_{\pa\Om}(z_1)$, %being as
%	\begin{equation*}
%		| \rho_{int}^1 - R| \le d_\Om \le \frac{d_\Om}{c^{\frac{\al}{4}}} \,  \de^{\frac{\al}{4}} 
%	\end{equation*}
%	\begin{equation*}
%		\left| \Om \De B_{\rho_{int}^1} (z_1) \right| \le |B_1| \, d_\Om^N \le \frac{|B_1| \, %d_\Om^N}{c^{ \frac{\al}{ \max \left\lbrace 4 , N \right\rbrace }}}  \de^{ \frac{\al}{ \max \left\lbrace %4 , N \right\rbrace }} .
%	\end{equation*}
%%%%%%%%%%%%%%%%%%%%%%%%%%%%%
\begin{equation*}
		\sup_{x\in \Om } \mathrm{dist}_{\pa\Om} (x) \le d_\Om \le \frac{d_\Om}{c^{\frac{\al}{2}}} \,  \de^{\frac{\al}{2}}  ,
\end{equation*}
\begin{equation*}
	| \pa\Om| = \frac{N | \Om |}{R} \le N| \Om| \le N \frac{|B_1| \, d_\Om^N}{c^{ \frac{\al}{ \max \left\lbrace 4 , N \right\rbrace }}}  \de^{ \frac{\al}{ \max \left\lbrace 4 , N \right\rbrace }}.
\end{equation*}
Here, in the first inequality we used the isoperimetric inequality, whereas in the last line we used the definition of $R$ in \eqref{eq:def H_0 and R} and the assumption $R \ge 1$.	
	\item In the proof of Theorem \ref{thm:stability with diameter} we will keep track of the constants involved, providing explicit estimates in terms of the parameters indicated in the statement, i.e., $N$, $d_\Om$, and $\cM_0^-$; in some of the constants we will make use of $|\Om|$ and $G$, but it is clear that they can be explicitly estimated by means of $|\Om| \le |B_1| d_\Om^N$ and \eqref{eq:gradient bound}.	
\end{enumerate}
}
\end{rem}

We are now ready to proceed with the

\begin{proof}[Proof of Theorem \ref{thm:stability with diameter}]
Let $\de$ be that defined in \eqref{eq:def delta} and assume that
\begin{equation}\label{eq:delta small minore di 1}
\de \le 1 .
\end{equation}
Let $u$ be the solution of \eqref{eq:torsionproblem}.

{\bf Step 1.}
We set $\al \in ( 0 , 1)$ as in
%%%
%\footnote{The value of $\al$ has been chosen to minimize the right-hand side of \eqref{eq:provadarichiamare %perindicare minimization}, which is the term carrying the worst power of $\de$ and will be used
%in \eqref{eq:step5 term 3}.}
%%%%%%
\eqref{eq:alpha}.
From \eqref{eq:CS deficit and positive part deviation} we
easily deduce (e.g., by Chebyshev's inequality)
%%%%%%%%%%%%
%have 
%%%%%%%%%%%%%%%%
that there exists a set $A_\de$ such that
\begin{equation}\label{eq:set Adelta}
	|A_\de| \le \de^{1-\al}
\end{equation}
and
\begin{equation}\label{eq: CS pointwise in Om -Ade}
\quad |D^2 u|^2 - \frac{(\De u)^2}{N} \le (N-1) G^2 \de^\al \text{ in } \Om \setminus A_\de ,
\end{equation}
where, as in \eqref{eq:gradient bound}, $G:= \max_{\ol{\Om}} |\na u|$.

For $0 < \ve < \max_{\ol{\Om}} (-u)$, set 
\begin{equation*}
\Om_\ve:= \left\lbrace x \in \Om \, : \, u < - \ve \right\rbrace .
\end{equation*}

It is clear that
	\begin{equation*}
	|\Om_\ve| \ge	|B_1|\left( \frac{\max_{\ol{\Om}}(-u) - \ve }{G} \right)^N  .
\end{equation*}

Notice that, being as $u=0$ on $\pa\Om$, we have that
\begin{equation}\label{eq:distance lower bound epsilon}
\mathrm{dist}_{\pa\Om} (x) \ge \frac{\ve}{G}	 \quad \text{ for any } x\in \Om_\ve ,
\end{equation}
where $\mathrm{dist}_{\pa\Om} (x)$ denotes the distance of $x$ to $\pa\Om$.

By classical interior regularity estimates (\cite{GT}), there exists a constant $\ol{C}(N)$ (only depending on $N$) such that
\begin{equation}\label{eq:interior estimates hessian}
|D^2 u (x)| \le \frac{\ol{C}(N)}{\mathrm{dist}_{\pa\Om} (x)^2 } \le \frac{\ol{C}(N) \, G^2}{\ve^2} \quad \text{ for any } x\in \Om_\ve ,
\end{equation}
where in the second inequality we used \eqref{eq:distance lower bound epsilon}.

Define
\begin{equation}\label{eq:def h}
h(x) := q(x) - u(x) \, \text{ for } x\in\ol{\Om}, \quad \text{where } q(x):=\frac{|x|^2}{2} \, \text{ for } x \in \RR^N ,
\end{equation}
and notice that, since $\De u =N$ in $\Om$, $h$ is harmonic in $\Om$ and
\begin{equation}\label{eq:relation Hessian h CS}
|D^2 h|^2 = |D^2 u|^2 - \frac{(\De u)^2}{N} .
\end{equation}

For any $x\in \Om_{2\ve}$ and $i,j=1\dots,N$, we compute that
\begin{equation}\label{eq:prima catena di disuguaglianze}
\begin{split}
\left| \frac{\pa^2}{\pa x_i \pa x_j}h (x) \right| 
& = \left| \frac{1}{|B_{\frac{\ve}{G}}|} \int_{B_{\frac{\ve}{G}}(x)} \frac{\pa^2}{\pa x_i \pa x_j}h (y) dy \right| 
\\
& = \frac{1}{|B_{\frac{\ve}{G}}|} \left|  \int_{B_{\frac{\ve}{G}}(x) \setminus A_\de} \frac{\pa^2}{\pa x_i \pa x_j}h (y) dy + \int_{B_{\frac{\ve}{G}}(x) \cap A_\de} \frac{\pa^2}{\pa x_i \pa x_j} h (y) dy \right|
\\
& \le
\frac{|B_{\frac{\ve}{G}}(x) \setminus A_\de|}{|B_{\frac{\ve}{G}}|}  \nr D^2 h \nr_{L^{\infty} ( B_{\frac{\ve}{G}}(x) \setminus A_\de )} + \frac{| B_{\frac{\ve}{G}}(x) \cap A_\de |}{|B_{\frac{\ve}{G}}|} \nr D^2 h \nr_{L^{\infty} ( B_{\frac{\ve}{G}}(x) \cap A_\de ) }
\\
& \le \sqrt{N-1} \,  G \,  \de^{\frac{\al}{2}}  + \frac{\de^{1-\al} G^N}{|B_1| \ve^{N}} \left(\sqrt{N} + \frac{\ol{C}(N)G^2}{\ve^2} \right)
\end{split}
\end{equation}
where in the last inequality we used that
\begin{equation*}
	\frac{|B_{\frac{\ve}{G}}(x) \setminus A_\de|}{|B_{\frac{\ve}{G}}|} \le 1 ,
\end{equation*}
\begin{equation*}
\nr D^2 h \nr_{L^{\infty} ( B_{\frac{\ve}{G}} (x) \setminus A_\de )} \le \sqrt{N-1} \,  G \,  \de^{\frac{\al}{2}} 
\qquad \text{(by \eqref{eq: CS pointwise in Om -Ade} and \eqref{eq:relation Hessian h CS})} ,
\end{equation*}
\begin{equation*}
| B_{\frac{\ve}{G}}(x) \cap A_\de | \le |A_\de|\le \de^{1-\al} \qquad \text{(by \eqref{eq:set Adelta})} ,
\end{equation*}
and
\begin{equation*}
\nr D^2 h \nr_{L^{\infty} ( B_{\frac{\ve}{G}} (x) \cap A_\de )} = \nr I - D^2 u \nr_{L^{\infty} ( B_{\frac{\ve}{G}} (x) \cap A_\de )} \le  \nr I - D^2 u \nr_{L^{\infty} \left( \Om_\ve \right)} \le \sqrt{N} + \frac{\ol{C}(N) G^2}{\ve^2} ,
\end{equation*}
being as $B_{\frac{\ve}{G}}(x) \cap A_\de \subset \Om_\ve$ for $x\in\Om_{2\ve}$ and recalling \eqref{eq:interior estimates hessian}. 
Setting 
%%%%%%%%%%%
%\begin{equation*}
%\ve:=\de^\be \quad \text{ with } \quad \be := \frac{2-3 \al}{2(N+2)} = \al
%\end{equation*}
%%%%%%%%%%%%%%%%%%
\begin{equation}\label{eq:def vepsilon}
	\ve:=\de^\al \quad \text{ with } \al \text{ as in \eqref{eq:alpha} }
\end{equation}
and recalling \eqref{eq:delta small minore di 1}, we clearly have that
\begin{equation*}
\frac{\de^{1-\al} G^N }{|B_1| \ve^{N}} \left( \sqrt{N} + \frac{\ol{C}(N)G^2}{\ve^2} \right) = \frac{\de^{1 - (N+1) \al } G^N }{|B_1| }  \left(\sqrt{N} + \frac{\ol{C}(N)G^2}{\de^{2 \al }} \right) \le C_1 \, \de^{1 - ( N + 3 ) \al} , 
\end{equation*}
where we have set
\begin{equation}\label{eq:C_1}
	C_1 :=  \frac{ G^N  }{|B_1|} \left( \sqrt{N} + \ol{C}(N) \, G^2 \right) ;
\end{equation}
hence,
\eqref{eq:prima catena di disuguaglianze} gives that, for any $x\in \Om_{2\ve}$ and $i,j=1\dots,N$,
\begin{equation*}
	\left| \frac{\pa^2}{\pa x_i \pa x_j}h (x) \right|  \le \sqrt{N-1} \,  G \,  \de^{\frac{\al}{2}}  + C_1 \, \de^{1 - ( N + 3 ) \al} = \left( \sqrt{N-1} \,  G + C_1 \right) \de^{\frac{\al}{2}} ,
\end{equation*}
where in the last equality we used that, by \eqref{eq:alpha}, $1 - ( N + 3 ) \al = \al/2$.
This clearly gives that
\begin{equation}\label{eq:Hessian h estimate}
	\nr D^2 h \nr_{L^\infty(\Om_{2 \ve})} \le C_2 \, \de^{\frac{\al}{2}} ,
\end{equation}
where
\begin{equation}\label{eq:C_2}
	C_2:= N \left( \sqrt{N-1} \,  G + C_1 \right) \quad \text{with } C_1 \text{ as in \eqref{eq:C_1}}.
\end{equation}
As a consequence, we have that if
\begin{equation}\label{eq:smallness delta 2}
	\de < \frac{1}{C_2^{\frac{2}{\al}}} ,
\end{equation}
then $D^2 u = I - D^2 h$ is positive definite in $\Om_{2\ve}=\Om_{2 \de^\al}$; therefore,
%%%%%%%
%{\frac{2 - 3 \al}{2(N+2)}}
%setting $l$ to be the number of connected components of $\Om_\ve$ and 
%%%%%%%%%%%%%%%%%%%%%%%
denoting with\footnote{Here, $\cI$ is an at most countable set.} $\left\lbrace \Om^i_{2 \ve} \right\rbrace_{ i \in \cI   }$ the connected components of $\Om_{2 \ve}$, we have that, for any $i \in \cI $, $\Om_{2 \ve}^i$ is convex (provided that \eqref{eq:smallness delta 2} holds true). In this case, in each $\ol{\Om}^i_{2 \ve}$, $u$ has only one minimum point (see, e.g., \cite{Ko}), which clearly belongs to $\Om_{2 \ve}^i$.

{\bf Step 2.} For $i \in \cI $, we denote with $z_i \in \Om_{2 \ve}^i$ the (unique) minimum point of $u$ in $\ol{\Om}_{2 \ve}^i$, and we define
\begin{equation}\label{eq:def h_z_i}
	h_{z_i}(x):= q_{z_i}(x) - u(x) \, \text{ for } x \in \ol{\Om}^i_{2\ve} ,
	\quad \text{ where }
	q_{z_i}(x):=\frac{|x-z_i|^2}{2} + u(z_i) \, \text{ for } x \in \RR^N . 
\end{equation}
It is readily checked that, for any $i \in \cI $, we have that
\begin{equation*}
	D^2 h_{z_i} = D^2 h,
\end{equation*}
where $h$ is that defined in \eqref{eq:def h}, and hence \eqref{eq:Hessian h estimate} gives that
\begin{equation}\label{eq:Hessian estimate h_z_i}
 \nr D^2 h_{z_i} \nr_{L^\infty(\Om^i_{2 \ve})} \le C_2 \, \de^{\frac{\al}{2}} \quad \text{for any } i \in \cI .
\end{equation}
Moreover, by definition \eqref{eq:def h_z_i} we also readily check that
\begin{equation*}
	h_{z_i}(z_i)=0 \quad \text{ and } \quad \na h_{z_i} (z_i)=0 .
\end{equation*}
%
%$z_i$ can be joined to any $x \in \ol{\Om}^i_{2 \ve}$ by a segment (contained in $\ol{\Om}^i_{2 \ve}$)
%
Since $\Om^i_{2 \ve}$ is convex, Taylor expansions give that\footnote{We mention that \eqref{eq:Taylor for the gradient of h_{z_i}} remains valid even without $\sqrt{N}$, but since we are not concerned with optimal constants, we do not need this refinement here.}
\begin{equation}\label{eq:Taylor for h_{z_i}}
\nr h_{z_i} \nr_{L^\infty(\Om^i_{2 \ve})} \le \nr D^2 h_{z_i} \nr_{L^\infty(\Om^i_{2 \ve})} \frac{(\rho^i_{ext})^2}{2} \le \frac{(\rho^i_{ext})^2}{2} \, C_2 \, \de^{\frac{\al}{2}} ,
\end{equation}
\begin{equation}\label{eq:Taylor for the gradient of h_{z_i}}
	\nr \na h_{z_i} \nr_{L^\infty(\Om^i_{2 \ve})} \le \nr D^2 h_{z_i} \nr_{L^\infty(\Om^i_{2 \ve})} (\rho^i_{ext}) \, \sqrt{N} \le (\rho^i_{ext}) \, \sqrt{N} \, C_2 \, \de^{\frac{\al}{2}} ,
\end{equation}
where we used \eqref{eq:Hessian estimate h_z_i} and set
\begin{equation}\label{eq:def rho_ext^i}
\rho^i_{ext}:= \max_{\pa \Om^i_{2\ve}} | x-z_i | .	
\end{equation}
We also define
\begin{equation}\label{eq:def rho_int^i}
	\rho^i_{int}:= \min_{\pa \Om^i_{2\ve}} | x-z_i | ;	
\end{equation}
Clearly, by definition we have that
\begin{equation}\label{eq:inclusions Brhoint Om2ve Brhoext and rhointlerhoext}
\rho^i_{int} \le \rho^i_{ext} \qquad \text{ and } \qquad 
B_{\rho^i_{int}}(z_i) \subset \Om_{2\ve}^i \subset B_{\rho^i_{ext}}(z_i) ;
\end{equation}
we now show that
\begin{equation}\label{eq:rho_ext-rho_int}
\rho^i_{ext} - \rho^i_{int} \le  2 \, d_\Om \, C_2 \, \de^{\frac{\al}{2}} \qquad \text{for any } i \in \cI .
\end{equation}
%\rho^i_{ext}

To prove \eqref{eq:rho_ext-rho_int}, we use the definitions \eqref{eq:def h_z_i}, \eqref{eq:def rho_ext^i}, and \eqref{eq:def rho_int^i} to easily compute that
\begin{equation*}
	\begin{split}
	\rho^i_{ext} - \rho^i_{int} 
	& \le \frac{\left(\rho^i_{ext} + \rho^i_{int}\right)}{\rho^i_{ext}} \left( \rho^i_{ext} - \rho^i_{int} \right)
	\\
	& =
	\frac{2}{\rho^i_{ext}} \left\lbrace \max_{x\in \pa \Om^i_{2\ve}} q_{z_i}(x) - \min_{x\in \pa \Om^i_{2\ve}} q_{z_i}(x) \right\rbrace
	\\
	& = \frac{2}{\rho^i_{ext}} \left\lbrace \max_{x\in \pa \Om_{2\ve}^i} h_{z_i}(x) - \min_{x\in \pa \Om^i_{2\ve}} h_{z_i}(x) \right\rbrace
	\\
	& \le \frac{4}{\rho^i_{ext}} \nr h_{z_i} \nr_{L^\infty(\Om^i_{2 \ve})} 
	\\
	& \le 2 \, \rho^i_{ext} \, C_2 \, \de^{\frac{\al}{2}} .
	\end{split}
\end{equation*}
Here, in the last equality we used that, by definition of $\Om_{2\ve}$, $u$ is constant on $\pa \Om_{2\ve}$ (more precisely, $u \equiv -2\ve$ on $\pa \Om_{2\ve}$), 
%%
%in the second to last inequality we used that $\min_{x\in \pa \Om^i_{2\ve}} h_{z_i}(x) \le 0$ (since $h_{z_i}$ is %harmonic and $h_{z_i} (z_i) = 0$),
%%%%%%%%%%%
and the last inequality follows from \eqref{eq:Taylor for h_{z_i}}.
Hence, by using the trivial inequality
\begin{equation}\label{eq:trivial inequality rhoext and dOm}
	\rho_{ext}^i \le d_\Om \qquad \text{for any } i \in \cI ,
\end{equation}
\eqref{eq:rho_ext-rho_int} easily follows.

By \eqref{eq:inclusions Brhoint Om2ve Brhoext and rhointlerhoext}, it is clear in particular that
\begin{equation}\label{eq:star per comodit alla fine}
	\left\lbrace B_{\rho_{int}^i} (z_i) \right\rbrace_{ i \in \cI } \, \text{ are disjoint balls and }
    \, \bigcup_{i \in \cI} B_{\rho_{int}^i}(z_i) \subset \Om_{2 \ve } \subset \Om .
\end{equation}

Recalling that, by the maximum principle we have that (see, e.g., \cite[Lemma 3.1]{MP2})
	\begin{equation*}
		\frac{\mathrm{dist}_{\pa\Om}(x)^2}{2} \le -u(x)	\quad \text{ for any } x\in \ol{\Om} ,
	\end{equation*}
	and using \eqref{eq:rho_ext-rho_int}, we observe that
	\begin{equation}\label{eq:provadarichiamare perindicare minimization}
		\mathrm{dist}_{\pa\Om}(x) \le 2  \ve^{\frac{1}{2}} + 2 \, d_\Om \, C_2 \, \de^{\frac{\al}{2}}  \quad \text{ for any } x\in \Om \setminus \bigcup_{i \in \cI} B_{\rho_{int}^i}(z_i),
	\end{equation}
	that is, recalling \eqref{eq:def vepsilon}, 
	\begin{equation}\label{eq:inclusion tubular}
		\Om \setminus \bigcup_{i \in \cI} B_{\rho_{int}^i}(z_i) \subset	\left\lbrace x \in \Om \, : \, \mathrm{dist}_{\pa\Om}(x) \le \left( 2 + 2 \, d_\Om \, C_2 \right) \, \de^{\frac{\al}{2}} \right\rbrace .
	\end{equation}
	Combining \eqref{eq:inclusion tubular} and Lemma \ref{lem:upper bound tube}, we thus obtain that
\begin{equation}\label{eq:stima misura complementare di tutte le palle}
	\left| \Om \setminus \bigcup_{i \in \cI} B_{\rho_{int}^i}(z_i) \right| \le \left\lbrace \left[ 1 + \left( 2 + 2 \, d_\Om \, C_2 \right) \cM_0^- \right]^{N-1}  \left( 2 + 2 \, d_\Om \, C_2 \right)   \right\rbrace \, | \pa\Om | \, \de^{\frac{\al}{2}} ,
\end{equation}
where we also made use of \eqref{eq:delta small minore di 1}.

In the remaining steps we shall prove that, actually, only a finite number of $B_{\rho_{int}^i}(z_i)$ have radius ``close'' to $R$ and these balls are ``enough'' to approximate $\Om$, as the possibly remaining balls have ``negligible'' radii.

{\bf Step 3.}
We now prove that
\begin{equation}\label{eq:fundamental inequality 1}
\left|	\int_{\Om} (-u) dx - \frac{R^2 | \Om | }{N+2} \right| \le C_3 \, \de^{\frac{1}{2}} 
\qquad \text{with} \quad C_3:= 2 \, \left( N | \Om |\right)^{\frac{1}{2}} \, d_\Om \, G^2 ,
\end{equation}
where $G$ is that defined in \eqref{eq:gradient bound}.
To this aim, recalling that $u$ solves \eqref{eq:torsionproblem}, integrating by parts we see that
\begin{equation*}
	\int_{\Om} (-u) dx = \frac{1}{N} \int_{\Om} |\na u |^2 dx = \frac{1}{N(N+2)}  \int_{\pa \Om} u_\nu^2 \langle x - z , \nu  \rangle dS_x , 
\end{equation*}
where the last identity follows by the classical Pohozaev identity \cite{Poh} and holds true for any given $z\in\RR^N$ (see, e.g, the last identity in the proof of \cite[Theorem 2.3]{Pog}); here, we fix $z$ to be any point in $\Om$.
Subtracting $\frac{R^2 | \Om | }{N+2}$ from both sides, we thus have that
\begin{equation*}
	\int_{\Om} (-u) dx - \frac{R^2 | \Om | }{N+2} = \frac{1}{N(N+2)}  \int_{\pa \Om} (u_\nu^2 - R^2) \langle x - z , \nu  \rangle dS_x ,
\end{equation*}
from which \eqref{eq:fundamental inequality 1} follows by using that
\begin{equation}\label{eq:AA la richiamo per dopo senza diametro}
	\begin{split}
\left| 	\int_{\pa \Om} (u_\nu^2 - R^2) \langle x - z , \nu  \rangle dS_x \right|
& \le d_\Om \left( R + \max_{\pa\Om} |\na u| \right) \int_{\pa \Om} | u_\nu - R | dS_x
\\
& \le d_\Om \left( R + \max_{\pa\Om} |\na u| \right) | \pa \Om |^{\frac{1}{2}} \nr u_\nu - R \nr_{L^2 (\pa\Om)}
\\
& \le  d_\Om \left( R + \max_{\pa\Om} |\na u| \right) \left( N | \Om |\right)^{\frac{1}{2}} \, G \,  \de^{\frac{1}{2}}
\\
& \le 2 \, \left( N | \Om |\right)^{\frac{1}{2}} \, d_\Om \, G^2 \, \de^{\frac{1}{2}} ; 
    \end{split}
\end{equation}
here, in the first inequality we used that $\langle x - z , \nu  \rangle \le d_\Om$ (being as $z \in \Om$), the second inequality follows from the H\"{o}lder inequality, in the third inequality we used \eqref{eq:Serrin deficit and positive part deviation}, and in the last inequality we used that, by definitions of $R$ and $G$ (i.e., \eqref{eq:def H_0 and R} and \eqref{eq:gradient bound}),
\begin{equation}\label{eq:stima da sopra di R}
	R = \frac{1}{|\pa\Om|} \int_{\pa\Om} u_\nu dS_x \le \max_{\pa\Om} |\na u| = G .
\end{equation}

{\bf Step 4.}
We prove that
\begin{equation}\label{eq:fundamental inequality 2}
	\left| \int_{\Om} (-u) dx - \frac{ | B_1 | }{N+2} \sum_{i \in \cI} \left( \rho_{int}^i \right)^{N+2} \right| \le C_4 \, \de^{\frac{\al}{2}} ,
\end{equation}
where $| B_1 |$ denotes the volume of the unit ball in $\RR^N$.
To this aim, we notice that
\begin{equation}\label{eq:stimare integral -u starting eq}
\begin{split}
\int_{\Om} (-u) dx - \frac{ | B_1 | }{N+2} \sum_{i \in \cI} \left( \rho_{int}^i \right)^{N+2}
= &
\int_{\Om \setminus \bigcup_{i \in \cI} B_{\rho_{int}^i}(z_i) }	(-u) dx
\\
& +
\int_{\bigcup_{i \in \cI} B_{\rho_{int}^i}(z_i) }	h_{z_i} dx
\\
& +
\int_{\bigcup_{i \in \cI} B_{\rho_{int}^i}(z_i) }	(- q_{z_i}) dx - \frac{ | B_1 | }{N+2} \sum_{i \in \cI} \left( \rho_{int}^i \right)^{N+2} ,
\end{split}
\end{equation}
where $q_{z_i}$ and $h_{z_i}$ are those defined in \eqref{eq:def h_z_i}.

We start estimating the first summand on the right-hand side of \eqref{eq:stimare integral -u starting eq}; 
using that
\begin{equation*}
	- u(x) \le 2 \ve \qquad \text{ for any } x \in \Om \setminus \Om_{2 \ve}
\end{equation*}
and that, by \eqref{eq:gradient bound} and \eqref{eq:rho_ext-rho_int},
\begin{equation*}
	\begin{split}
	- u(x) & \le 2 \ve + G \left( \rho_{ext}^i - \rho_{int}^i \right)  
	\\
	& \le 2 \ve + 2 G \, d_\Om \, C_2 \, \de^{\frac{\al}{2}}    \qquad \text{ for any } x \in  \Om^i_{2 \ve}\setminus B_{\rho_{int}^i}(z_i)  \text{ and } i \in \cI ,
	\end{split}
\end{equation*}
%%%%%%%%%%%
%\le 2 \ve + G \, \rho_{ext}^i \, C_2 \, \de^{\frac{\al}{2}}
%%%%%%%%%%%
we easily deduce that
\begin{equation}\label{eq:aggiunto step evitando tubular}
- u(x) \le 2  \ve + 2 G \, d_\Om \, C_2 \, \de^{\frac{\al}{2}} 	\le \left( 2  + 2 G \, d_\Om \, C_2 \right)\, \de^{\frac{\al}{2}} \quad \text{ for any } x \in \Om \setminus \bigcup_{i \in \cI} B_{\rho_{int}^i}(z_i) 	;
\end{equation}
in the last inequality we used \eqref{eq:def vepsilon} and \eqref{eq:delta small minore di 1}.
Using \eqref{eq:aggiunto step evitando tubular}, we thus compute that
\begin{equation}\label{eq:term I for step 3}
		\int_{\Om \setminus \bigcup_{i \in \cI} B_{\rho_{int}^i}(z_i) }	(-u) dx	
		\le |\Om|  \left( 2  + 2 G \, d_\Om \, C_2 \right)\, \de^{\frac{\al}{2}} .
\end{equation}
We mention that a finer estimate for this term could be obtained by exploiting \eqref{eq:stima misura complementare di tutte le palle} (instead of using the rough estimate $\left| \Om \setminus \bigcup_{i \in \cI} B_{\rho_{int}^i}(z_i) \right| \le |\Om|$), but we stress that such a refinement is not needed here, as the worst power rate would however appear in \eqref{eq:step5 term 3}.

%%%%%%%%%%%%%%%%%%%
%compute
%\begin{equation*}
%\begin{split}
%\int_{\Om \setminus \bigcup_{i \in \cI} B_{\rho_{int}^i}(z_i) }	(-u) dx	
%& \le \int_{\left\lbrace x \in \Om \, : \, \mathrm{dist}_{\pa\Om}(x) \le 2  \ve^{\frac{1}{2}} + d_\Om %\, C_2 \, \de^{\frac{\al}{2}} \right\rbrace } (-u) dx
%\\
%& \le 
%\left| \left\lbrace x \in \Om \, : \, \mathrm{dist}_{\pa\Om}(x) \le 2  \ve^{\frac{1}{2}} + d_\Om \, C_2 %\, \de^{\frac{\al}{2}} \right\rbrace \right|	\, G \, \left[ 2  \ve^{\frac{1}{2}} + d_\Om \, C_2 \, %\de^{\frac{\al}{2}}  \right]
%\\
%& \le \ul{C} \left[ 2  \ve^{\frac{1}{2}} + d_\Om \, C_2 \, \de^{\frac{\al}{2}} \right]^2 ,
%\end{split}
%\end{equation*}
%where in the second inequality we used that, by definition of $G$,
%\begin{equation*}
%	-u(x)	\le G \, \mathrm{dist}_{\pa\Om}(x) ,
%\end{equation*}
%and in the last inequality we used Lemma \ref{lem:upper bound tube}.
%A direct inspection shows that
%\begin{equation*}
%	\ul{C} := \left( 1 + \eta \cM_0^- \right)^{N-1} \, | \pa\Om | \quad \text{ with } \eta:=2  %\ve^{\frac{1}{2}} + d_\Om \, C_2 \, \de^{\frac{\al}{2}} 
%\end{equation*}
%
%ATTENZIONE COMPARE PERIMETRO. TANTO FORSE NON IMPORTA USARE IL LEMMA QUI. COMUNQUE $| \pa\Om | = %\frac{N|\Om|}{R}$
%%%%%%%%%%%%%%%%%%
%%%%%%%%%%%%%%%%%%%%%%%%%%%%%%%%%%

For the second summand on the right-hand side of \eqref{eq:stimare integral -u starting eq}, recalling \eqref{eq:Taylor for h_{z_i}} and that $B_{\rho_{int}^i} (z_i) \subset \Om_{2\ve}^i$, we compute:
\begin{equation}\label{eq:term II for step 3}
	\left| \int_{\bigcup_{i \in \cI} B_{\rho_{int}^i}(z_i) }	h_{z_i} dx \right| \le \frac{ d_\Om^2}{2} C_2 |\Om| \,  \de^{\frac{\al}{2}};
\end{equation}
here, we also used the trivial inequality \eqref{eq:trivial inequality rhoext and dOm}.

For the last line of \eqref{eq:stimare integral -u starting eq}, noting that
\begin{equation*}
	\begin{split}
\int_{B_{\rho_{int}^i}(z_i)} \frac{ (\rho_{int}^i)^2 - |x - z_i|^2 }{2} dx
& =
\frac{(\rho_{int}^i)^2 }{2} | B_{\rho_{int}^i}(z_i)| - \frac{1}{2(N+2)}\int_{\pa B_{\rho_{int}^i}(z_i)} |x-z_i|^2 \langle x- z_i , \nu \rangle dS_x 
\\
& =
\frac{ | B_1 | \left( \rho_{int}^i \right)^{N+2} }{N+2} ,
\end{split}
\end{equation*}
we compute
%\left( - q_{z_i} - \frac{ (\rho_{int}^i)^2 - |x - z_i|^2  }{2}  \right) dx
\begin{equation}\label{eq:term IIIA for step 3}
	\begin{split}
	 \int_{\bigcup_{i \in \cI} B_{\rho_{int}^i}(z_i) }	(- q_{z_i}) dx - \frac{ | B_1 | }{N+2} \sum_{i \in \cI} \left( \rho_{int}^i \right)^{N+2} 
	& = \int_{\bigcup_{i \in \cI} B_{\rho_{int}^i}(z_i) }
	\left(  \frac{ |x - z_i|^2 -  (  \rho_{int}^i)^2  }{2} - q_{z_i}  \right) dx
	\\
	& =  \int_{\bigcup_{i \in \cI} B_{\rho_{int}^i}(z_i) } \left( - u(z_i) - \frac{(\rho_{int}^i)^2 }{2} \right) dx ;
	\end{split}
\end{equation}
in what follows we shall prove that
\begin{equation}\label{eq:comparison estimates u(zi) rho_i}
	0 \le -u(z_i) - \frac{(\rho_{int}^i)^2}{2} \le \hat{C} \de^{\frac{\al}{2}} \quad \text{in } B_{\rho_{int}^i} (z_i), \quad \text{ where } \hat{C}:= 2 + 2 d_\Om^2 C_2 \left(  C_2 + 1 \right) .
\end{equation} 

By comparing $u$ -- that is the solution to \eqref{eq:torsionproblem} -- with $v:=\frac{ |x - z_i|^2 - (\rho_{ext}^i)^2 }{2} - 2 \ve$ -- that is the solution to $\De v=N$ in $B_{\rho_{ext}^i} (z_i)$, $v=-2\ve$ on $\pa B_{\rho_{ext}^i} (z_i)$ -- we obtain that
\begin{equation*}
	-u(z_i) \le \frac{(\rho_{ext}^i)^2}{2} + 2 \ve  \text{ in } \Om_{2\ve}^i ,
\end{equation*}
which combined with \eqref{eq:rho_ext-rho_int} leads to 
\begin{equation*}
	-u(z_i) \le \frac{(\rho_{int}^i)^2}{2} + 2 \ve + 
	\frac{ ( 2 d_\Om C_2)^2}{2} \, \de^\al + 2 d_\Om C_2 \rho_{int}^i \, \de^{\frac{\al}{2}} ,
\end{equation*}
%%%%%%%%
%	\left[ \frac{ (\rho_{ext}^i C_2)^2}{2} + \rho_{ext}^i C_2 \rho_{int}^i \right] \de^{\frac{\al}{2}} ,
%%%%%%%%%%%%%
and hence, using \eqref{eq:def vepsilon}, \eqref{eq:delta small minore di 1}, and $\rho_{int}^i \le \rho_{ext}^i \le d_\Om$,
\begin{equation*}
	-u(z_i) \le \frac{(\rho_{int}^i)^2}{2} + \left[2 + 2 d_\Om^2 C_2 \left(  C_2 + 1 \right) \right] \de^{\frac{\al}{2}} .
\end{equation*}

On the other hand, by comparing $u$ with $\frac{ |x - z_i|^2 - (\rho_{int}^i)^2 }{2}$ in $B_{\rho_{int}^i} (z_i)$ we immediately obtain that
\begin{equation*}
\frac{(\rho_{int}^i)^2}{2} \le -u(z_i) \quad \text{in } B_{\rho_{int}^i} (z_i).
\end{equation*}
The last two inequalities clearly give \eqref{eq:comparison estimates u(zi) rho_i}.

The triangle inequality, \eqref{eq:stimare integral -u starting eq}, and the fact that $(-u) \ge 0$ give that
\begin{equation*}
	\begin{split}
		\left| \int_{\Om} (-u) dx - \frac{ | B_1 | }{N+2} \sum_{i \in \cI} \left( \rho_{int}^i \right)^{N+2} \right|
		\le &
		\int_{\Om \setminus \bigcup_{i \in \cI} B_{\rho_{int}^i}(z_i) }	(-u) dx
		\\
		& +
		\left| \int_{\bigcup_{i \in \cI} B_{\rho_{int}^i}(z_i) }	h_{z_i} dx \right|
		\\
		& +
		\left| \int_{\bigcup_{i \in \cI} B_{\rho_{int}^i}(z_i) }	(- q_{z_i}) dx - \frac{ | B_1 | }{N+2} \sum_{i \in \cI} \left( \rho_{int}^i \right)^{N+2} \right| ,
	\end{split}
\end{equation*}
from which we easily obtain \eqref{eq:fundamental inequality 2} with
$$
C_4:= |\Om| \left( 2  + 2 G \, d_\Om \, C_2 +  \frac{ d_\Om^2}{2} C_2+  \hat{C} \right)
$$
by using \eqref{eq:term I for step 3}, \eqref{eq:term II for step 3}, \eqref{eq:term IIIA for step 3}, and \eqref{eq:comparison estimates u(zi) rho_i}.
%%%%%%%%
%and hence,
%\begin{equation*}
%	\left| -u(z_i) - \frac{(\rho_{int}^i)^2}{2} \right| \le \hat{C} \de^{\frac{\al}{2}} \quad \text{in } %B_{\rho_{int}^i} .
%\end{equation*}
%%%%%%%%%%%%%%%%%%%%%%%%%%%

{\bf Step 5.} Combining the previous two steps, in particular \eqref{eq:fundamental inequality 1} and \eqref{eq:fundamental inequality 2}, we easily obtain that
\begin{equation}\label{eq:inequality radii squared}
	\left| R^2 |\Om| - |B_1| \sum_{i \in \cI} (\rho_{int}^i)^{N+2} \right| \le (N+2) \left( C_3 + C_4 \right) \, \de^{\frac{\al}{2}} ,
\end{equation}
where we also used \eqref{eq:delta small minore di 1} as usual.
We now also shall prove that
\begin{equation}\label{eq:inequality radii}
	\left| R |\Om| - |B_1| \sum_{i \in \cI} (\rho_{int}^i)^{N+1} \right| \le C_5 \, \de^{\frac{\al}{2}} 
\end{equation}
so that
\begin{equation}\label{eq:radii sum inequality}
	\begin{split}
	|B_1| \sum_{i \in \cI} (\rho_{int}^i)^{N} \left( \rho_{int}^i - R \right)^2  
	& = |B_1| \sum_{i \in \cI} (\rho_{int}^i)^{N+2} + R^2 |B_1| \sum_{i \in \cI} (\rho_{int}^i)^{N} - 2 R |B_1| \sum_{i \in \cI} (\rho_{int}^i)^{N+1}
	\\
	& \le |B_1| \sum_{i \in \cI} (\rho_{int}^i)^{N+2} + R^2 |\Om| - 2 R |B_1| \sum_{i \in \cI} (\rho_{int}^i)^{N+1}
	\\
	& \le C_6 \, \de^{\frac{\al}{2}}  \qquad \qquad \quad \text{ with } C_6:= (N+2)(C_3 + C_4) + 2 G \, C_5,
\end{split}
\end{equation}
where the first inequality follows from the fact that $B_{\rho_{int}^i} (z_i)$ are disjoint balls all contained in $\Om$, whereas the last inequality follows from \eqref{eq:inequality radii squared}, \eqref{eq:inequality radii}, and \eqref{eq:stima da sopra di R}.

In order to prove \eqref{eq:inequality radii}, we notice that, since
$$\dv \left(|\na u| \na u \right) = (N+1 ) | \na u| + |\na u|^{-1} \langle (D^2 u - I) \na u, \na u \rangle $$
the divergence theorem gives that
\begin{equation}\label{eq:label per correzione a step 5}
	\begin{split}
	(N+1) \int_{\Om} |\na u| dx 
	& = \int_{\pa\Om} | \na u | u_\nu \, dS_x - \int_\Om |\na u|^{-1} \langle (D^2 u - I) \na u, \na u \rangle dx  
	\\
	& = R N |\Om| + \int_{\pa\Om} \left(  u_\nu^2 - R^2 \right) dS_x - \int_\Om |\na u|^{-1} \langle (D^2 u - I) \na u, \na u \rangle dx ,
	\end{split}
\end{equation}
where the last identity immediately follows recalling the definition of $R$ in \eqref{eq:def H_0 and R}; hence, using \eqref{eq:gradient bound}, the H\"{o}lder inequality, and \eqref{eq:CS deficit and positive part deviation} to see that
\begin{equation*}
\begin{split}
\left| \int_\Om |\na u|^{-1} \langle (D^2 u - I) \na u, \na u \rangle dx \right| 
& \le \int_\Om \left| D^2 u - I \right| | \na u | dx
\\
& \le G |\Om|^{\frac{1}{2}} \left( \int_\Om \left| D^2 u - I \right|^2 dx \right)^{\frac{1}{2}}
\\
& = G |\Om|^{\frac{1}{2}} \left( \int_\Om \left( |D^2 u|^2 - \frac{(\De u)^2}{N} \right) dx  \right)^{\frac{1}{2}}
\\
& \le \sqrt{N-1} \, G^2 |\Om|^{\frac{1}{2}} \, \de^{\frac{1}{2}} ,
\end{split}
\end{equation*}
and using \eqref{eq:Serrin deficit and positive part deviation} (similarly to \eqref{eq:AA la richiamo per dopo senza diametro} but without $\langle x-z , \nu \rangle$ and $d_\Om$) to see that
\begin{equation*}
\left| \int_{\pa\Om} \left(  u_\nu^2 - R^2 \right) dS_x \right| \le 2 \, \left( N | \Om |\right)^{\frac{1}{2}}  \, G^2 \, \de^{\frac{1}{2}} , 	
\end{equation*}
\eqref{eq:label per correzione a step 5} easily leads to
\begin{equation}\label{eq:step 5 I}
	\left| R |\Om| - \frac{(N+1)}{N} \int_{\Om} |\na u| dx \right| \le \left( \frac{2 \sqrt{N} + \sqrt{N-1} }{N} \right) | \Om |^{\frac{1}{2}} \, G^2 \, \de^{\frac{1}{2}} .
\end{equation}
On the other hand, recalling \eqref{eq:star per comodit alla fine} we have
%%%%%%%%%%
%\begin{equation}
%		\int_{\Om} |\na u | dx =
%		\int_{\Om \setminus \bigcup_{i \in \cI} B_{\rho_{int}^i}(z_i) }	| \na u | dx
%	+
%		\int_{\bigcup_{i \in \cI} B_{\rho_{int}^i}(z_i) }	| \na q_{z_i} - \na h_{z_i} | dx ,
%\end{equation}
%%%%%%%%%%%%%%%%%%%%%%%%%%%%%%%%
\begin{equation}\label{eq:step5prova}
		\int_{\Om} |\na u | dx -  \sum_{i \in \cI} \int_{ B_{\rho_{int}^i}(z_i) }	| \na q_{z_i}|  dx =
		\int_{\Om \setminus \bigcup_{i \in \cI} B_{\rho_{int}^i}(z_i) }	| \na u | dx
	+
		\sum_{i \in \cI} \int_{ B_{\rho_{int}^i}(z_i) }	\left( |\na u| - | \na q_{z_i}| \right)  dx ,
\end{equation}
and hence the triangle inequality gives
\begin{equation}\label{eq:NEW_gradient from above and below step 5}
\left| \int_{\Om} |\na u | dx - \sum_{i \in \cI} \int_{ B_{\rho_{int}^i}(z_i) }	| \na q_{z_i}  | dx \right| 	\le \int_{\Om \setminus \bigcup_{i \in \cI} B_{\rho_{int}^i}(z_i) }	| \na u | dx
+
\sum_{i \in \cI} \int_{ B_{\rho_{int}^i}(z_i) }	| \na h_{z_i} | dx ,
\end{equation}
where $q_{z_i}$ and $h_{z_i}$ are those defined in \eqref{eq:def h_z_i}.
%%%%%%%%%%%%%%%%%%
%where $q_{z_i}$ and $h_{z_i}$ are those defined in \eqref{eq:def h_z_i}, by the triangle inequality we have
%\begin{equation}\label{eq:gradient from below}
%\int_{\bigcup_{i \in \cI} B_{\rho_{int}^i}(z_i) }	| \na q_{z_i}  | dx - \int_{\bigcup_{i \in \cI}
%B_{\rho_{int}^i}(z_i) }	| \na h_{z_i} | dx	
%\le \int_{\Om} |\na u | dx   
%\end{equation}
%and
%\begin{equation}\label{eq:gradient from above step 5}
%\int_{\Om} |\na u | dx 	\le \int_{\Om \setminus \bigcup_{i \in \cI} B_{\rho_{int}^i}(z_i) }	| \na u | dx
%+
%\int_{\bigcup_{i \in \cI} B_{\rho_{int}^i}(z_i) }	| \na h_{z_i} | dx 
%+ 
%\int_{\bigcup_{i \in \cI} B_{\rho_{int}^i}(z_i) }	| \na q_{z_i}  | dx
%.
%\end{equation}
%%%%%%%%%%%%%%%%%%%%%%%%%%

By direct computation it is easy to check that
\begin{equation*}
	\int_{B_{\rho_{int}^i}(z_i)}  |x - z_i| dx
	=
	\frac{1}{N+1}\int_{\pa B_{\rho_{int}^i}(z_i)} |x-z_i| \langle x- z_i , \nu \rangle dS_x 
	=
	\frac{ N | B_1 | \left( \rho_{int}^i \right)^{N+1} }{N+1} ,
\end{equation*} 
so that, recalling the definition of $q_{z_i}$ in \eqref{eq:def h_z_i},
\begin{equation}\label{eq:step5 term 2}
\sum_{i \in \cI}	\int_{ B_{\rho_{int}^i}(z_i) }	|\na q_{z_i} | dx = \frac{ N |  B_1 | }{N+1} \sum_{i \in \cI} \left( \rho_{int}^i \right)^{N+1} .
\end{equation}
We now estimate the terms appearing in the the right-hand side of \eqref{eq:NEW_gradient from above and below step 5}.
Using \eqref{eq:Taylor for the gradient of h_{z_i}} and recalling
%%
%that $B_{\rho_{int}}^i \subset \Om_{2\ve^i}$, 
%%%
\eqref{eq:star per comodit alla fine}, we obtain that
\begin{equation}\label{eq:step5 term 1}
	\sum_{i \in \cI} \int_{ B_{\rho_{int}^i}(z_i) }	| \na h_{z_i} | dx  \le d_\Om \, \sqrt{N} \, C_2 |\Om| \,  \de^{\frac{\al}{2}}.
\end{equation}

Finally, for the first summand on the right-hand side of \eqref{eq:NEW_gradient from above and below step 5}, we combine 
%%%%%%
%\eqref{eq:inclusion tubular}, Lemma \ref{lem:upper bound tube},
%%%%%%%%%%
\eqref{eq:stima misura complementare di tutte le palle} and the definition of $G$ in \eqref{eq:gradient bound} to obtain that
\begin{equation}\label{eq:step5 term 3}
	\int_{\Om \setminus \bigcup_{i \in \cI} B_{\rho_{int}^i}(z_i) }	| \na u | dx \le 
	G \, \left| \Om \setminus \bigcup_{i \in \cI} B_{\rho_{int}^i}(z_i) \right| 
	\le C_7 \, \de^{\frac{\al}{2}},
\end{equation} 
%%%%%%%%%%
%\le G \left| \left\lbrace x \in \Om \, : \, \mathrm{dist}_{\pa\Om}(x) \le 2  \ve^{\frac{1}{2}} + d_\Om %\, C_2 \, \de^{\frac{\al}{2}} \right\rbrace \right| 
%\le \ul{C} \left[ 2  \ve^{\frac{1}{2}} + d_\Om \, C_2 \, \de^{\frac{\al}{2}} \right] .
%%%%%%%%%%%%
where
\begin{equation*}
C_7 := G \, \left\lbrace \left[ 1 + \left( 2 + 2 d_\Om \, C_2 \right) \cM_0^- \right]^{N-1}  \left( 2 + 2 d_\Om \, C_2 \right)   \right\rbrace \, N \, | \Om | ;
\end{equation*}
here we used that by definition \eqref{eq:def H_0 and R}, $|\pa\Om|=N|\Om|/R \le N |\Om|$, being as $R \ge 1$.

Putting together \eqref{eq:NEW_gradient from above and below step 5}, \eqref{eq:step5 term 2}, \eqref{eq:step5 term 1}, and \eqref{eq:step5 term 3} we thus obtain that
\begin{equation}\label{eq:step 5 II}
 \left| \frac{(N+1)}{N} \int_{\Om} |\na u | dx 	-  |  B_1 |  \sum_{i \in \cI} \left( \rho_{int}^i \right)^{N+1} \right| \le C_8 \, \de^{\frac{\al}{2}} , \, \text{ where } C_8 := \frac{(N+1)}{\sqrt{N}} \left( d_\Om C_2 |\Om| + \frac{C_{7}}{\sqrt{N}} \right).
\end{equation}
 
Combining \eqref{eq:step 5 I}, \eqref{eq:step 5 II}, and, as usual, \eqref{eq:delta small minore di 1}, we thus obtain \eqref{eq:inequality radii} with
\begin{equation*}
	C_5:= \left( \frac{2 \sqrt{N} + \sqrt{N-1} }{N} \right) | \Om |^{\frac{1}{2}} \, G^2 + C_8 .
\end{equation*}
%%%%%%%
%where, as usual, we used \eqref{eq:delta small minore di 1}. and that $\al \le 1/2$.
%%%%

{\bf Step 6.} 
By \eqref{eq:radii sum inequality}, we deduce that
\begin{equation*}
	|B_1| (\rho_{int}^i)^{N} \left( \rho_{int}^i - R \right)^2 \le C_6 \, \de^{\frac{\al}{2}} \quad \text{ for any } i \in \cI .
\end{equation*}
Therefore, for any $i \in \cI$, analyzing the two cases $\rho_{int}^i<R/2$ and $\rho_{int}^i \ge R/2$ we find that either
\begin{equation}\label{eq:small radii}
	\rho_{int}^i \le \frac{C_9}{R^{\frac{2}{N}}} \, \de^{\frac{\al}{2 N}} \le  C_9 \, \de^{\frac{\al}{2 N}}
\end{equation}
or
\begin{equation}\label{eq:radii big enough}
	| \rho_{int}^i - R| \le \frac{C_{10}}{R^{\frac{N}{2}}} \,  \de^{\frac{\al}{4}} \le C_{10} \,  \de^{\frac{\al}{4}} ;
\end{equation}
the last inequalities in \eqref{eq:small radii} and \eqref{eq:radii big enough} easily follow recalling that $R \ge 1$.
It is easy to check that \eqref{eq:small radii} and \eqref{eq:radii big enough} hold true with
\begin{equation*}
	C_9 := \left( \frac{ 4 C_6 }{|B_1|} \right)^{\frac{1}{N}} 
	\quad \text{ and } \quad 
	C_{10} := \left( \frac{ 2^N C_6 }{|B_1|} \right)^{\frac{1}{2}} .
\end{equation*}
Set
\begin{equation*}
\cB := \left\lbrace i \in \cI \, : \, \rho_{int}^i \text{ satisfies } \eqref{eq:radii big enough} \right\rbrace \subset \cI  \quad \text{ and } m:=| \cB| .
\end{equation*}
Notice that $m$ is a finite natural number; in fact, by \eqref{eq:radii big enough},
\begin{equation}\label{eq:in proof bound m}
	m |B_1| \left( R- C_{10} \,  \de^{\frac{\al}{4}} \right)^N \le |B_1| \sum_{ i\in \cB} ( \rho_{int}^i )^N \le \left| \Om \right| ,  
\end{equation}
that is, \eqref{eq:bound on number m}.

By \eqref{eq:inequality radii squared}, we compute
\begin{equation*}
	\begin{split}
	R^2 |\Om| 
	& \le | B_1 | \sum_{ i \in \cI } ( \rho_{int}^i )^{N+2} + \left( C_3 + C_4 \right) \, \de^{\frac{\al}{2}} 
	\\
	& = | B_1 | \sum_{ i \in \cB } ( \rho_{int}^i )^{N+2} + | B_1 | \sum_{ i \in \cI \setminus \cB } ( \rho_{int}^i )^{N+2} +  \left( C_3 + C_4 \right) \, \de^{\frac{\al}{2}}
	\\
	& \le | B_1 | \sum_{ i \in \cB } ( \rho_{int}^i )^{N+2} +   \left( 
	C_9 
	\, \de^{\frac{\al}{2 N}} \right)^2  \left| \bigcup_{i \in \cI \setminus \cB} B_{\rho_{int}^i} (z_i) \right| +  \left( C_3 + C_4 \right) \, \de^{\frac{\al}{2}}
	\\
	& \le | B_1 | \sum_{ i \in \cB } ( \rho_{int}^i )^{N+2} +    
	C_9^2 
	\,   \left| \Om \right| \, \de^{\frac{\al}{N}} +  \left( C_3 + C_4 \right) \, \de^{\frac{\al}{2}}
	\\
	& \le \left( R + 
	C_{10} 
	\,  \de^{\frac{\al}{4}}  \right)^2 \left| \bigcup_{i \in \cB} B_{\rho_{int}^i} (z_i)  \right| +   
	C_9^2  
	\,   \left| \Om \right| \, \de^{\frac{\al}{N}} +  \left( C_3 + C_4 \right) \, \de^{\frac{\al}{2}} 
	\\
	& \le 
	R^2 \left| \bigcup_{i \in \cB} B_{\rho_{int}^i} (z_i)  \right| + \left[
	\left(
	C_{10} 
	\,  \de^{\frac{\al}{4}}  \right)^2 + 2 \, R \, C_{10} \, \de^{\frac{\al}{4}} \right] \left| \Om  \right| +   
	C_9^2  
	\,   \left| \Om \right| \, \de^{\frac{\al}{N}} +  \left( C_3 + C_4 \right) \, \de^{\frac{\al}{2}}
	,
	\end{split}
\end{equation*}
where we used \eqref{eq:small radii}, \eqref{eq:star per comodit alla fine}, and \eqref{eq:radii big enough}. Recalling \eqref{eq:delta small minore di 1} and \eqref{eq:stima da sopra di R}, we thus obtain that
\begin{equation*}
		R^2 \left( |\Om| - \left| \bigcup_{i \in \cB} B_{\rho_{int}^i } (z_i)  \right| \right) \le
		 C_{11} \,  \de^{\frac{\al}{\max \left\lbrace 4, N \right\rbrace}}  
		 \quad \text{ with }
		 C_{11} := |\Om| \left( 
		 C_{10}^2 
		 + 2 
		 G \, C_{10} 
		 + 
		 C_9^2 
		 \right)
		 + \left( C_3 + C_4 \right) , 
\end{equation*}
and hence
\begin{equation}\label{eq:inequality for Brhoi}
\left| \Om \De \bigcup_{i \in \cB} B_{\rho_{int}^i } (z_i)  \right| = \left| \Om \setminus \bigcup_{i \in \cB} B_{\rho_{int}^i} (z_i)  \right| =	|\Om| - \left| \bigcup_{i \in \cB} B_{\rho_{int}^i} (z_i)  \right| \le \frac{C_{11}}{R^2}  \, \de^{\frac{\al}{\max \left\lbrace 4, N \right\rbrace}}
\le C_{11}  \, \de^{\frac{\al}{\max \left\lbrace 4, N \right\rbrace}} ,
\end{equation}
being as $R \ge 1$.
We thus have found $m$ disjoint balls $B_{\rho_{int}^i } (z_i) \subset \Om$ satisfying \eqref{eq:radii big enough}, \eqref{eq:inequality for Brhoi}, and \eqref{eq:in proof bound m}. 
Clearly, without loss of generality, we can assume that $\cB = \left\lbrace 1, \dots, m \right\rbrace$, and hence,
setting
\begin{equation*}
	\mathcal{F}:= \bigcup_{i \in \cB} B_{\rho_{int}^i} (z_i) = \bigcup_{i =1}^m B_{\rho_{int}^i} (z_i),
\end{equation*}
\eqref{eq:thm_new global statement radii}, \eqref{eq:thm_new global statement measure}, and \eqref{eq:bound on number m} immediately follow.
%%%%%%%%%%%%%%%%%%%%%%%%%%%%
%%%%%%%%%%%%%%%
%The one-sided Hausdorff estimate \eqref{eq:thm_one-sided Hausdorff closeness} easily follows %from \eqref{eq:inclusion tubular}.
%%%%%%%%%%%%%%%%%%%%
Moreover, from \eqref{eq:inclusion tubular} we deduce that, for any $i \in \cB$,
$$
\mathrm{dist}_{\pa\Om}(x) \le \left( 2 + 2 \, d_\Om \, C_2 \right) \, \de^{\frac{\al}{2}} \quad \text{ for any } x \in \pa B_{\rho^i_{int}}(z_i) ,
$$
from which \eqref{eq:thm_one-sided Hausdorff closeness} easily follows.

{\bf Step 7.} It only remains to prove the closeness of the perimeter of $\mathcal{F}$ to that of $\Om$, that is \eqref{eq:thm_perimeter closeness}. This can be easily deduced from the estimates obtained in the previous steps, using an argument similar to that used at the end of \cite[{\it Step 3} of the proof of Theorem 1.1]{CirMag}. 
To this aim, recalling the definition of $R$ in \eqref{eq:def H_0 and R}, we compute that
\begin{equation}\label{eq:newproofforperimetercloseness_1}
	\left| | \pa\Om | - | \pa \mathcal{F} | \right| 
	= \left| \frac{N |\Om| }{R} - | \pa \mathcal{F} | \right| 
	\le \frac{N}{R} \left| \left| \Om \right| - \left| \mathcal{F} \right| \right| + \left| \frac{N|\mathcal{F}|}{R} - | \pa \mathcal{F} | \right| .
\end{equation}
%%%%%%%%%%%%%%%%%%%%%%%%%%%%%%	
%	\\
%	& \le N \, C_{11} \, \de^{\frac{\al}{\max \left\lbrace 4, N \right\rbrace}} + m N %|B_1|d_\Om^{N-1} C_{10} \, \de^{\frac{\al}{4}} .
%%%%%%%%%%%%%%%%%%%%%%%%%%%%%%
%For the first summand in the last term of \eqref{eq:newproofforperimetercloseness_1}, 
%%%%%%%%%%%%%%%%%%%%%%%

Recalling that $R \ge 1$ and using \eqref{eq:inequality for Brhoi} we easily compute that
\begin{equation}\label{eq:newproofforperimetercloseness_2}
	\frac{N}{R} \left| \left| \Om \right|  - \left| \mathcal{F} \right| \right| \le N \left| \left| \Om \right|  - \left| \mathcal{F} \right| \right| \le N \, C_{11} \, \de^{\frac{\al}{\max \left\lbrace 4, N \right\rbrace}} .
\end{equation}
%%%%%%%%%%
%For the second summand in the last term of \eqref{eq:newproofforperimetercloseness_1}, 
%%%%%%%%%%%%%%%%%%%%%

Recalling that $R\ge 1$, from \eqref{eq:radii big enough} it is easy to see that 
\begin{equation}\label{eq:thanks to 3 smallness rho_int^i >= R/2}
	\rho_{int}^i \ge \frac{R}{2} ,
\end{equation}
provided that
%%%%%%%%%%%%%%
%	{eq:smallness assumption delta 3_new}
%%%%%%%%%
\begin{equation*}
	\de < \left( \frac{1}{2 \, C_{10}} \right)^{\frac{4}{\al}} ;
\end{equation*}
%%%%%%%%%%%%%%%%%%%%%%%%%%
%	and from \eqref{eq:in proof bound m} and the fact that $R \ge 1$ we obtain that
%	$$
%	m \le 2^N \frac{|\Om|}{|B_1|} \le (2 d_\Om)^N.
%	$$
%%%%%%%%%%%%%%%%%%
hence, using \eqref{eq:thanks to 3 smallness rho_int^i >= R/2}, $R\ge 1$, and \eqref{eq:star per comodit alla fine} we compute that 
\begin{equation}\label{eq:newproofforperimetercloseness_3}
	\begin{split}
		\left| \frac{N|\mathcal{F}|}{R} - | \pa \mathcal{F} | \right| 
		& = N |B_1| \left| \sum_{i \in \cB} \left( \rho_{int}^i \right)^{N-1} \left( \frac{\rho_{int}^i}{R} -1 \right) \right|
		\\
		& \le N  \left( \max_{ i \in \cB} \left| \rho_{int}^i - R \right| \right) |B_1| \sum_{i \in \cB}  \left( \rho_{int}^i \right)^{N-1}
		\\
		& \le 2 \, N  \left( \max_{ i \in \cB} \left| \rho_{int}^i - R \right| \right) |B_1| \sum_{i \in \cB}  \left( \rho_{int}^i \right)^{N}
		\\
		& \le 2 \, N  | \Om | \max_{ i \in \cB} \left| \rho_{int}^i - R \right| 
		\\
		& \le 2 \, N  | \Om | C_{10} \,  \de^{\frac{\al}{4}} ,
	\end{split}
\end{equation}
%
%%%%%%%%%%%%%%%%%%%%%%
%\\
%& \le \frac{N |B_1|}{R} \max_{ i \in \cB} \left| \rho_{int}^i - R \right| \sum_{i \in \cB} %\left( \rho_{int}^i \right)^{N-1}
%%%%%%%%%%%%%%%%%%%%%%%%%
%\\
%& \le m N |B_1| d_\Om^{N-1} \max_{ i \in \cB} \left| \rho_{int}^i - R \right| 
%\\
%&
%\le m N |B_1| d_\Om^{N-1} C_{10} \,  \de^{\frac{\al}{4}} ,
%%%%%%%%%%%%%%%%%%%%%%%
%%%%%%%%%%%%%%%%%%%%%%%%%%%%%%%%%%
%where in the second inequality we used that $R\ge 1$
%and $\rho_{int}^i \le d_\Om$ for any $i\in \cB$, and 
%%%%%%%%%%%%%
where the last inequality follows from \eqref{eq:radii big enough}.

Combining \eqref{eq:newproofforperimetercloseness_1}, \eqref{eq:newproofforperimetercloseness_2}, \eqref{eq:newproofforperimetercloseness_3}, and recalling \eqref{eq:delta small minore di 1} we easily obtain that
\begin{equation*}
	\left| | \pa\Om | - | \pa \mathcal{F} | \right| \le N \left( C_{11}+ 2 | \Om | C_{10} \right) \, \de^{\frac{\al}{\max \left\lbrace 4, N \right\rbrace}} ,
\end{equation*}
that is, \eqref{eq:thm_perimeter closeness}.
This completes the proof.
%%%%%%%%%%%%%%%%%%%%%%%%%%%%%%%%%
%%%%%%%%%%%%%%%%%%%%%%%%%%%%%%%%%%%%%%
%and the theorem is proved.
%%%%%%%%%%%%%%%%%%%%%
%%%%%%%%%%%%%%%%%%%%
%
%%%%%%%%%%%%%%%%%%%%%%%
%Using \eqref{eq:inequality for Brhoi} and \eqref{eq:rho_ext-rho_int}, \eqref{eq:stability %with diameter %distance zi zj} and \eqref{eq:stabilitywithdiameter and M-} easily follow and %Theorem %\ref{thm:stability with diameter} is proved.
%%%%%%%%%%%%%%%%%%%%%%%%%%%%%
\end{proof}

We now proceed with the
\begin{proof}[Proof of Corollary \ref{cor:two corollaries}]
	Item (i) immediately follows combining Theorem \ref{thm:stability with diameter} and Remark \ref{rem:intro}.
	
	For item (ii) we simply notice that, by the triangle inequality
	\begin{equation*}
		\nr H \nr_{L^{N-2} (\pa \Om)} \le \nr H_0 \nr_{L^{N-2} (\pa \Om)}  +  \nr H - H_0 \nr_{L^{N-2} (\pa \Om)} \le  N^{-1} C_0^{\frac{2N-3}{N-2}}  +  \nr H - H_0 \nr_{L^{N-2} (\pa \Om)} ,
	\end{equation*}
	where we used the definition \eqref{eq:def H_0 and R} and that $|\pa\Om|\le C_0$ and $|\Om| \ge 1/C_0$;
	hence,
	\begin{equation*}
		\int_{\pa\Om} |H|^{N-2} \, dS_x  \le \left( N^{-1} C_0^{\frac{2N-3}{N-2}}  +  1 \right)^{ N-2 }
	\end{equation*}
	provided that
	\begin{equation*}
	\nr H - H_0 \nr_{L^{N-2} (\pa \Om)} \le 1 .
	\end{equation*}
	The desired result then immediately follows from item (i) of Corollary \ref{cor:two corollaries}, noting that
	\begin{equation*}
	\de := \int_{\pa\Om} \left(H_0-H\right)^+ dS_x \le | \pa \Om |^{\frac{N-3}{N-2}} \nr H_0 - H \nr_{L^{N-2} (\pa \Om)} \le C_0^{\frac{N-3}{N-2}} \nr H_0 - H \nr_{L^{N-2} (\pa \Om)} ,
	 \end{equation*}
by the H\"{older} inequality and the assumption $|\pa\Om|\le C_0$.
\end{proof}

%%%%%%%%%%%%%%%%%%%%%%%%%%%%%%%%%%%%%%%%%%%%%%%%
%NEW
%%%%%%%%%%%%%%%%%%%%%%%%%%%% 
%%%%%%%%%%%%%%%%%%%%%%%%%%%%%
%%%%%%%%%%%%%%%%%%%%%%%%%%%%%%%%%%%%%%%%%%

We conclude with the

\begin{proof}[Proof of Theorem \ref{thm:NewAlternative_equal radii}]
Notice that the analogue of (i) of Remark \ref{rem:Initial remarks on proof of THM} also holds true for Theorem \ref{thm:NewAlternative_equal radii}. Thus, it is enough to prove the desired result when $\de \le c$, where $c$ is some explicit constant only depending on the parameters indicated in the statement, i.e., $N$, $d_\Om$, and $\cM_0^-$. Hence, we can assume that \eqref{eq:delta small minore di 1} is in force. 

Let $z_i$ and $\rho_{int}^i$, $i = 1, \dots, m$, be those obtained in
Theorem \ref{thm:stability with diameter}.
Recalling that $R \ge 1$, from \eqref{eq:thm_new global statement radii} and \eqref{eq:bound on number m} it is easy to check that
\begin{equation}\label{eq:new_explicit R/2 lower bound for rho_int^i in proof of the last Theorem}
	\rho_{int}^i \ge \frac{R}{2} \quad \text{ for any } i=1,\dots,m
\end{equation}
and
\begin{equation}\label{eq:new_explicit bound for m under smallness 3}
 m \le 2^N \frac{|\Om|}{|B_1|} ,
\end{equation}
provided that $\de < \left( \frac{1}{2C} \right)^{\frac{4}{\al}} $ (where $C$ is as in \eqref{eq:thm_new global statement radii} and \eqref{eq:bound on number m}).

Given the set
$$\mathcal{F}:= \bigcup_{i=1}^m B_{\rho_{int}^i} (z_i)$$
defined in Theorem \ref{thm:stability with diameter}, we can repeat the iterative argument from \cite[{\it Step 5} of the proof of Theorem~1.1]{CirMag} to obtain $m$ points $\widehat{z}_i$
%%%%%%%%%%%%%%%%%%%%%%%%%%%%%
%, $i = 1, \dots, m$,
%%%%%%%%%%%%%%%%%%%%%%%%%%%%%%%%%
such that the balls $B_R (\widehat{z}_i)$ (centered at $\widehat{z}_i$, with radius $R$), $i = 1, \dots, m$, are disjoint and the set
\begin{equation*}
	\widehat{\mathcal{F}}:= \bigcup_{i=1}^m B_{R} (\widehat{z}_i)
\end{equation*}
satisfies
\begin{equation}\label{eq:in proof_NewAlternative_thm_for L^1 closeness}
	\left| \mathcal{F} \De \widehat{\mathcal{F}} \right| \le C \,  \de^{ \frac{\al}{ 4 }} ,
\end{equation}
\begin{equation}\label{eq:in proof_NewAlternative_thm_for one-sided Hausdorff closeness}
	\max_{x\in \pa \widehat{\mathcal{F}} } \mathrm{dist}_{\pa \mathcal{F} } (x) \le C \, \de^{\frac{\al}{4}},
\end{equation}
thanks to \eqref{eq:thm_new global statement radii} and \eqref{eq:new_explicit bound for m under smallness 3}.
Here, the constants $C$ can be explicitly computed and only depend on $N$, and upper bounds on $d_\Om$ and $\cM_0^-$.
 
Since $\mathcal{F} \subset \Om$, we have that
\begin{equation*}
	\left| \Om \De \widehat{\mathcal{F}} \right| \le |\Om \setminus \mathcal{F}| + \left| \mathcal{F} \De \widehat{\mathcal{F}} \right| ,
\end{equation*}
and hence \eqref{eq:NewAlternative_thm_new global statement measure} easily follows using \eqref{eq:thm_new global statement measure} and \eqref{eq:in proof_NewAlternative_thm_for L^1 closeness}, recalling \eqref{eq:delta small minore di 1}.

Noting that
\begin{equation*}
\max_{x\in \pa \widehat{\mathcal{F}} } \mathrm{dist}_{\pa\Om} (x) \le \max_{x\in \pa \mathcal{F} } \mathrm{dist}_{\pa\Om} (x) + \max_{x\in \pa \widehat{\mathcal{F}} } \mathrm{dist}_{\pa \mathcal{F} } (x) ,
\end{equation*}
\eqref{eq:NewAlternative_thm_one-sided Hausdorff closeness} easily follows using \eqref{eq:thm_one-sided Hausdorff closeness} and \eqref{eq:in proof_NewAlternative_thm_for one-sided Hausdorff closeness}, recalling \eqref{eq:delta small minore di 1}. 
%%%%%%%%%%%%%%%%%%%%%%%%%%%%%%%%%%%%%%%
%%%%%Proof of the inequality
%%%%%%%%%%%%%%%%%%%%%%%%%%%%%%%%%%%%%%%
%The above displayed inequality can be easily proved by considering
%$\ul{x} \in \pa \widehat{\mathcal{F}}$ such that
%$\max_{x\in \pa \widehat{\mathcal{F}} } \mathrm{dist}_{\pa\Om} (x) = \mathrm{dist}_{\pa\Om} %( \ul{x})$, $\ul{y} \in \pa \mathcal{F}$ such that $\mathrm{dist}_{\pa \mathcal{F} } %(\ul{x}) = | \ul{x} - \ul{y}|$, $\ul{z} \in \pa\Om$ such that $\mathrm{dist}_{\pa \Om } %(\ul{y}) = | \ul{y} - \ul{z}|$, and computing that
%\begin{equation*}
%\begin{split}
%\max_{x\in \pa \widehat{\mathcal{F}} } \mathrm{dist}_{\pa\Om} (x) = \mathrm{dist}_{\pa\Om} ( %\ul{x}) \le |\ul{x} - \ul{z}| 
%& \le | \ul{x} - \ul{y}| +	| \ul{y} - \ul{z}| 
%\\
%& = \mathrm{dist}_{\pa \mathcal{F} } (\ul{x}) + \mathrm{dist}_{\pa \Om } (\ul{y}) 
%\\
%& \le \max_{x\in \pa \widehat{\mathcal{F}} } \mathrm{dist}_{\pa \mathcal{F} } (x) + %\max_{x\in \pa \mathcal{F} } \mathrm{dist}_{\pa\Om} (x) .
%\end{split}
%\end{equation*}
%%%%%%%%%%%%%%%%%%%%%%%%%%%%%%%%%%%%%%
%%%%%%%%%%%%%%%%%%%%%%%%%%%%%%%%%%%%%%%%%%

Finally, to establish \eqref{eq:NewAlternative_thm_perimeter closeness}, we notice that
\begin{equation}\label{eq:proofnewAlternative for perimeter_1}
	\left| | \pa \Om| - | \pa \widehat{\mathcal{F}} | \right| \le 	\left| | \pa \Om| - | \pa \mathcal{F} | \right| + 	\left| | \pa \mathcal{F} | - | \pa \widehat{\mathcal{F}} | \right| ,
\end{equation}
and compute that
\begin{equation}\label{eq:proofnewAlternative for perimeter_2}
	\begin{split}
		\left| | \pa \mathcal{F} | - | \pa \widehat{\mathcal{F}} | \right|
		& \le N |B_1| \sum_{i=1}^m \left| (\rho_{int}^i)^{N-1} - R^{N-1} \right| 
		\\
		& \le  N (N-1) |B_1| \sum_{i=1}^m \max\left\lbrace \rho_{int}^i , R \right\rbrace^{N-2} \left| \rho_{int}^i - R \right|
		\\
		& \le 2^{N-2} \, N (N-1)  \left( \max_{i\in\left\lbrace 1, \dots , m \right\rbrace } \left| \rho_{int}^i - R \right| \right) |B_1| \sum_{i=1}^m (\rho_{int}^i)^{N-2} 
		\\
		& \le 2^{N} \, N (N-1) |\Om| \max_{i\in\left\lbrace 1, \dots , m \right\rbrace } \left| \rho_{int}^i - R \right| ,
	\end{split}	 
\end{equation}
where we used Lagrange's mean value theorem, \eqref{eq:new_explicit R/2 lower bound for rho_int^i in proof of the last Theorem}, and 
$$
|B_1| \sum_{i=1}^m (\rho_{int}^i)^{N-2} \le 2^2 |B_1| \sum_{i=1}^m (\rho_{int}^i)^{N} \le 2^2 |\Om|, 
$$
which follows from \eqref{eq:new_explicit R/2 lower bound for rho_int^i in proof of the last Theorem}, $R \ge 1$, and \eqref{eq:star per comodit alla fine}.
Combining \eqref{eq:proofnewAlternative for perimeter_1}, \eqref{eq:thm_perimeter closeness}, \eqref{eq:proofnewAlternative for perimeter_2}, \eqref{eq:thm_new global statement radii}, and recalling \eqref{eq:delta small minore di 1} we easily obtain \eqref{eq:NewAlternative_thm_perimeter closeness}.
\end{proof}

%%%%%%%%%%%%%%%%%%%%%%%%%%%%%%%%%%%%%%%%%%%%%%%%%%%%
%END NEW
%%%%%%%%%%%%%%%%%%%%%%%%%%%%%%%%%%%%%%%%%%%%%%%%%%%%%%%%

\section*{Acknowledgements}
The author is supported by the Australian Research Council (ARC) Discovery Early Career Researcher Award (DECRA) DE230100954 ``Partial Differential Equations: geometric aspects and applications'' and by the Australian Academy of Science J G Russell Award. 
The author is member of the Australian Mathematical Society (AustMS).

The author thanks Francesco Maggi for interesting comments and Emanuel Indrei for pointing out that some of the techniques used in the present paper may also be useful in relation to \cite{E.Indrei}.
The author also thanks the anonymous referees for their careful reading and helpful comments. In particular, the author acknowledges the suggestions of one of the referees to add Theorem \ref{thm:NewAlternative_equal radii} and to include the one-sided Hausdorff estimate \eqref{eq:thm_one-sided Hausdorff closeness} and the perimeter bound \eqref{eq:thm_perimeter closeness} in the statement of Theorem \ref{thm:stability with diameter}.

%\subsection*{Data availability} Data sharing not applicable to this article as no datasets %were generated or analysed during the current study.

%%%%%%%%%%%%%%%%%%%%%%%%%%%%%%%%%%
%	\begin{rem}
%		{\rm 
%			Taylor expansion gives
%			\begin{equation*}
%				\nr  h_{z_i} \nr_{C^1(\Om^i_{2 \ve})} \le C_2 \left( \frac{(\rho^i_{ext})^2}{2} + %(\rho^i_{ext}) \, \sqrt{N} \right) \de^{\frac{\al}{2}} .	
%			\end{equation*}
%		}
%	\end{rem}
%%%%%%%%%%%%%%%%%%%%%%%%%

	\end{document}